\newcommand{\la}{\langle}
\newcommand{\ra}{\rangle}
\newcommand{\bbk}{\mathbbm{k}}
\DeclareMathOperator{\Hom}{Hom}
\DeclareMathOperator{\Ext}{Ext}
\DeclareMathOperator{\End}{End}
\DeclareMathOperator{\uHom}{\underline{Hom}}
\DeclareMathOperator{\Lie}{Lie}
\DeclareMathOperator{\co}{H}
\newcommand{\ind}[2]{\mathrm{ind}^{#1}_{#2}}
\newcommand{\res}[2]{\mathrm{res}^{#1}_{#2}}
\newcommand{\wts}{\mathbb{X}}
\newcommand{\dom}{\wts^+}
\newcommand{\dm}{\mathrm{dom}} 
\newcommand{\conv}{\mathrm{conv}}
\newcommand{\wtles}{\preceq} 
\newcommand{\wtless}{\prec} 
\newcommand{\gm}{\mathbb{G}_{\mathrm{m}}}
\newcommand{\spr}{\widetilde{\mathcal{N}}}
\newcommand{\nilp}{\mathcal{N}}
\newcommand{\upmu}[1]{\tilde{\mu}_{#1}}
\newcommand{\gr}{\mathcal{G}r}
\newcommand{\str}{\mathcal{O}} 
\newcommand{\vb}{\mathscr{S}} 
\newcommand{\exstd}{\Delta}
\newcommand{\excos}{\nabla}
\newcommand{\expstd}{\bar{\Delta}}
\newcommand{\expcos}{\bar{\nabla}}
\newcommand{\exirr}{\mathscr{L}}
\newcommand{\vd}{\mathbb{D}}
\newcommand{\coh}{\mathrm{Coh}}
\newcommand{\der}{\mathrm{D}^\mathrm{b}}
\newcommand{\dmix}{\mathrm{D}^{\mathrm{mix}}}
\newcommand{\rep}{\mathrm{Rep}}
\newcommand{\tilt}{\mathrm{Tilt}}
\newcommand{\leqnomode}{\tagsleft@true}
\newcommand{\reqnomode}{\tagsleft@false}
\newtheorem{thm}{Theorem}[section]
\newtheorem{lemma}[thm]{Lemma}
\newtheorem{prop}[thm]{Proposition}
\newtheorem{cor}[thm]{Corollary}
\theoremstyle{definition}
\newtheorem{defn}[thm]{Definition}
\theoremstyle{remark}
\newtheorem{remark}[thm]{Remark}
\numberwithin{equation}{section}
\numberwithin{figure}{section}
\begin{document}
\title{Exotic t-structure for partial resolutions of the nilpotent cone}

\author[K.~Chan]{Kei Yuen Chan}
\address{Shanghai Center for Mathematical Sciences, Jiangwan Campus, Fudan University, Shanghai, China.}
\email{kychan@fudan.edu.cn}
 \author[L.~Rider]{Laura Rider}
\address{Department of Mathematics, University of Georgia, Athens, Georgia 30602, USA.}
\email{laurajoy@uga.edu}
\author[P.~Sobaje]{Paul Sobaje}
\address{Department of Mathematics, Georgia Southern University, Statesboro, Georgia 30458, USA.}
\email{psobaje@georgiasouthern.edu}
\date{\today}

 \thanks{L.R. was supported by NSF Grant No. DMS-1802378.}

\begin{abstract}
We define and study an exotic $t$-structure on the bounded derived category of equivariant coherent sheaves on partial resolutions of the nilpotent cone.  
\end{abstract}

\maketitle
\section{Introduction}

Let $G$ be a connected, reductive algebraic group defined over an algebraically closed field $\Bbbk$ satisfying a technical assumption (see Subsection \ref{subsec:notation}). Let $P$ denote any parabolic subgroup of $G$ and $\nilp_P$ denote the $P\times \gm$-variety of nilpotent elements in $\mathrm{Lie}(P)$. From this data, we can define the following $G\times \gm$-variety
\[\spr^P = G\times^P\nilp_P \cong \{(x, Q) | Q\subset G \textup{ is conjugate to } P \textup{ and } x\in\nilp_Q \}.\] In this note, we study $\der\coh^{G \times \mathbb{G}_m}(\spr^P)$, the bounded derived category of $G\times\gm$-equivariant coherent sheaves on $\spr^P$. This variety plays roles in both the Springer correspondence \cite{BM} and in the generalized Springer correspondence \cite{Lu}. 
\subsection{Extreme cases} When $P = G$, $\spr^G =:\nilp $ is a singular variety more commonly called the nilpotent cone. The geometry of the nilpotent cone and the structure of its singularities have intricate and often surprising connections to the representation theory of $G$ and its Lie algebra. In this case, $\der\coh^{G \times \mathbb{G}_m}(\nilp)$ is known to admit a \textit{perverse coherent} t-structure. This t-structure (defined similarly to the perverse t-structure for constructible sheaves) plays a role in the proofs of the Lusztig--Vogan bijection \cite{BEZ2}, Humphrey's conjecture for quantum groups \cite{BEZ} and for algebraic groups \cite{ahr}, and Mirkovi\'c--Vilonen conjecture \cite{ARd2, MR2} to name a few. 

The key properties which make this non-standard t-structure extremely useful are that
\begin{enumerate}
\item all perverse coherent sheaves have finite length,
\item characterization of the simple perverse coherent sheaves, and finally 
\item the category satisfies recollement or gluing. 
\end{enumerate}
The first two properties follow in a straightforward way from the definition of the t-structure (similar to the case of perverse constructible sheaves). The third property \textit{does not}. In general, sheaf functors that preserve bounded complexes of coherent sheaves need not have adjoints with the same property (in stark contrast with constructible sheaves). Property (3) is a consequence of an alternative construction of this t-structure due to Bezrukavnikov \cite{BEZ2} using a \textit{quasi-exceptional collection} (see  \cite[2.2]{BEZ2} for a definition). 

When $P = B$ is a Borel subgroup of $G$, $\spr^B =: \spr$ identifies with the cotangent bundle of the flag variety associated to $G$, and as such is smooth. Moreover, the natural morphism $\mu: \spr\rightarrow\nilp$ (called the Springer resolution) resolves the singularities of $\nilp$. In this case, there is no perverse coherent t-structure on $\der\coh^{G \times \mathbb{G}_m}(\spr)$. However, there is a non-standard t-structure called the \textit{exotic} t-structure (\cite{BEZ}) which satisfies versions of properties (1)-(3) above, and such that $\mu_*$ takes exotic sheaves to perverse coherent sheaves. This t-structure is defined using an \textit{exceptional collection}. This exotic t-structure also appears repeatedly in representation theoretic applications (\cite{abg, ARd, MR2, AR, amrw}). 

\subsection{Partial resolution} When $B\subset P\subset G$, the Springer resolution factors through $\spr^P$,  $\spr\stackrel{\upmu{P}}{\rightarrow}\spr^P\stackrel{\mu_{P}}{\rightarrow}\nilp$, and $\spr^P$ partially resolves the singularities of $\nilp$, so we refer to $\spr^P$ as a partial resolution of the nilpotent cone. In general, the category $\der\coh^{G \times \mathbb{G}_m}(\spr^P)$ has no full exceptional collection since the variety $\spr^P$ is singular. Furthermore, the perverse-coherent t-structure does not apply because the $G$-orbits on $\spr^P$ do not satisfy the needed conditions as in \cite{ArB}. However, we are able to prove that the $\upmu{P}$-pushforward of (a subset of) the exceptional collection on $\spr$ is at least quasi-exceptional. Then arguments in \cite{BEZ2} carry-over in a straight-forward way to produce a t-structure.

\begin{thm}
There is a unique $t$-structure on $\der\coh^{G \times \mathbb{G}_m}(\spr^P)$, called the exotic t-structure, 
such that $\upmu{P*}$ takes exotic sheaves on $\spr$ to exotic sheaves on $\spr^P$. \end{thm}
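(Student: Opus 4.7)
The plan is to follow the strategy indicated just before the theorem: show that a suitable subset of the exotic exceptional collection on $\spr$ pushes forward under $\upmu{P}$ to a quasi-exceptional collection on $\spr^P$, and then invoke the $t$-structure construction from \cite[Section~2.2]{BEZ2}. Concretely, let $\{\exstd_\lambda\}_{\lambda \in \wts}$ denote the standard objects (with costandards $\{\excos_\lambda\}$) of the exceptional collection defining the exotic $t$-structure on $\der\coh^{G\times\gm}(\spr)$, and let $\wts^{P,+} \subset \wts$ be the subset of weights dominant with respect to the Levi factor $L$ of $P$. I would propose
\[
\bigl\{\upmu{P*}\exstd_\lambda\bigr\}_{\lambda \in \wts^{P,+}} \quad \text{and} \quad \bigl\{\upmu{P*}\excos_\lambda\bigr\}_{\lambda \in \wts^{P,+}}
\]
as the candidate (co)standards on $\spr^P$. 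The restriction to $\wts^{P,+}$ is natural because the fibers of $\upmu{P}$ are (translates of) the partial flag variety $P/B \cong L/B_L$, and Kempf's vanishing for $L$ ensures that the derived pushforward is concentrated in degree zero exactly when $\lambda$ is $L$-dominant.

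The second step is to verify the quasi-exceptional axioms. The required vanishing of
\[
\Hom_{\der\coh^{G\times\gm}(\spr^P)}\bigl(\upmu{P*}\exstd_\lambda,\, \upmu{P*}\exstd_\mu[i]\bigr)
\]
for the appropriate pairs $(\lambda,\mu,i)$ reduces, via the $(\upmu{P*},\upmu{P}^!)$ adjunction and the projection formula, to a computation on $\spr$ involving $\upmu{P}^!\upmu{P*}\exstd_\mu$. Using the partial-flag fibration structure of $\upmu{P}$ together with Kempf-type vanishing on $P/B$, this unwinds to the exceptional-collection relations already known on $\spr$. Once the pair $\{\upmu{P*}\exstd_\lambda\}$, $\{\upmu{P*}\excos_\lambda\}$ is shown to be quasi-exceptional, \cite[Section~2.2]{BEZ2} yields a $t$-structure on $\der\coh^{G\times\gm}(\spr^P)$ whose heart contains these pushforwards by construction.

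The third step is to upgrade this to handle all weights: to show $\upmu{P*}$ sends \emph{every} exotic sheaf on $\spr$ into the heart on $\spr^P$, it suffices to check this on the generating objects $\exstd_\lambda$, $\excos_\lambda$ for $\lambda \notin \wts^{P,+}$. One would express such an object, via a Weyl-type resolution along the $L$-direction, as an iterated extension of standards indexed by $L$-dominant weights, so its pushforward remains in the heart. Uniqueness then follows because any $t$-structure satisfying the pushforward condition must contain the $\upmu{P*}\exstd_\lambda$ in its heart, and the quasi-exceptional/recollement formalism of \cite{BEZ2} shows that this pins down the $t$-structure. The principal obstacle I anticipate is the verification of the quasi-exceptional axioms for the pushed-forward collection: this requires precise control of $\upmu{P}^!\upmu{P*}$ using the $L$-flag fiber structure and Kempf vanishing, together with a complementary generation argument (likely at the level of equivariant $K$-theory) ensuring that $\{\upmu{P*}\exstd_\lambda\}_{\lambda\in\wts^{P,+}}$ in fact spans $\der\coh^{G\times\gm}(\spr^P)$ as a triangulated category.
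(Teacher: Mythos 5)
Your high-level outline matches the paper's: push forward the exceptional collection on $\spr$ under $\upmu{I*}$, check that the resulting objects (indexed by $\wts^+_I$) form a dualizable quasi-exceptional collection, and invoke \cite[Proposition 1]{BEZ2}. The candidate objects are essentially correct (up to the normalizing twist $\expstd_I(\lambda) = \upmu{I*}\exstd(w_I\lambda)\la -\delta^I_{w_I\lambda}\ra$ on the standard side). But the crucial technical step that makes the whole thing work is misidentified, and what you substitute for it is false.

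The key issue is your ``third step.'' You claim that for $\lambda \notin \wts^+_I$ one can write $\exstd(\lambda)$ (or $\excos(\lambda)$) as an iterated extension of standards with $L$-dominant labels via a Weyl-type resolution. This cannot work: the objects $\{\exstd(\lambda)\}_{\lambda\in\wts}$ form an exceptional collection indexed by \emph{all} of $\wts$ and are linearly independent in the Grothendieck group, so a non-$L$-dominant $\exstd(\lambda)$ is not an extension of $L$-dominant ones. Relatedly, your rationale for restricting the index set to $\wts^+_I$ (``Kempf vanishing kills the non-dominant pushforwards'') is also not what happens: for non-$L$-dominant $\lambda$, $\upmu{I*}\excos(\lambda)$ is nonzero. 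The actual mechanism is Lemma~\ref{lem:stdtostd} of the paper: for $s\in W_I$ a simple reflection, one has $\upmu{I*}\excos(\lambda) \cong \upmu{I*}\excos(s\lambda)\la \mp 1\ra$, proved by showing $\upmu{I*}\circ\Psi_s = 0$ using the $\mathbb{P}^1$-fibration $G\times^B\mathfrak{u}_\alpha\to G\times^{P_\alpha}\mathfrak{u}_\alpha$ and the vanishing of the pushforward of $\str_{\mathbb{P}^1}(-1)$. So the pushforwards within a $W_I$-orbit are grading twists of one another — that is the redundancy that lets you index by $\wts^+_I$, and it is also what makes $\upmu{I*}$ exact on \emph{all} (co)standards. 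Without this lemma you have no way to relate $\upmu{I*}\exstd(\lambda)$ for non-dominant $\lambda$ to your candidate collection. Your verification of the quasi-exceptional axioms by directly computing $\upmu{P}^!\upmu{P*}$ is also more optimistic than what the paper does (it interpolates through the pulled-back dual Weyl bundles $\mathcal{V}_I(\mu)$ and uses a star-product argument), but that is more a matter of difficulty than an outright error; the ``third step'' is the genuine gap.
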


As with the cases of $\spr$ and $\nilp$, our construction of the t-structure implies versions of properties (1)-(3) hold for exotic sheaves on $\spr^P$. Moreover, we strengthen property (3). For the Springer resolution $\spr$, the heart $\mathrm{ExCoh}^{G \times \mathbb{G}_m}(\spr)$ is known to be \textit{graded highest weight} by \cite{ab, MR}. In particular, this means there exist collections of objects called standards and costandards satisfying certain $\mathrm{Ext}$-vanishing conditions.  On the other hand, for the nilpotent cone $\nilp$, Minn-Thu-Aye proves the heart is \textit{graded properly stratified} in \cite{M}. This is slightly weaker than being highest weight. The difference between the two settings lies in having \textit{proper standard objects} that are not \textit{true standard objects} (in the highest weight case these classes are one and the same). We prove that exotic sheaves on $\spr^P$ also have this additional structure. 

\begin{thm}
The heart of the exotic t-structure $\mathrm{ExCoh}^{G \times \mathbb{G}_m}(\spr^P)$ is graded properly stratified. \end{thm}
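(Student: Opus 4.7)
The plan is to follow the axiomatic approach of Minn-Thu-Aye \cite{M}, where the analogous structure is established for perverse coherent sheaves on $\nilp$. Four families of objects are required in the heart: proper standards $\expstd_\lambda$, proper costandards $\expcos_\lambda$, true standards $\exstd_\lambda$, and true costandards $\excos_\lambda$, indexed by the same set $\Lambda$ that labels the simples $\exirr_\lambda$, together with the defining $\Ext$-vanishing and filtration axioms.

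The true standards and true costandards come directly from Theorem~1.1 by pushforward:
\[\exstd_\lambda := \upmu{P*}\exstd^{\spr}_\lambda, \qquad \excos_\lambda := \upmu{P*}\excos^{\spr}_\lambda,\]
where $\exstd^{\spr}_\lambda, \excos^{\spr}_\lambda$ are the standards and costandards of the highest weight exotic heart on $\spr$, and $\lambda$ ranges over the subset of $\dom$ identified in Theorem~1.1. That these objects lie in the heart, have $\exirr_\lambda$ as unique simple head (respectively socle), and satisfy $\Hom(\exstd_\lambda, \excos_\mu) = \delta_{\lambda\mu}\bbk$ with the appropriate higher-$\Ext$ vanishings follows from the quasi-exceptional collection formalism already invoked in Theorem~1.1.

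The proper standards and proper costandards require additional construction. I would define $\expstd_\lambda$ as the maximal quotient of $\exstd_\lambda$ whose composition factors are $\exirr_\lambda$ (as head, with multiplicity one) together with $\exirr_\mu\langle n\rangle$ for $\mu \prec \lambda$, and dually $\expcos_\lambda$ as the maximal such subobject of $\excos_\lambda$. Nonvanishing of $\expstd_\lambda$ and the multiplicity statement follow from finite-dimensionality of the graded $\Ext$-spaces between the quasi-exceptional objects, which by the adjunctions $(\upmu{P}^*, \upmu{P*})$ and $(\upmu{P*}, \upmu{P}^!)$ reduce to known $\Ext$-finiteness statements on $\spr$.

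The principal obstacle is the filtration axiom: each true standard $\exstd_\lambda$ must admit a filtration whose subquotients are Tate twists of $\expstd_\lambda$ alone (with $\expstd_\lambda$ appearing on top), and dually for costandards. I would argue by induction on $\preceq$, identifying the successive extensions as classes in $\Ext^1(\expstd_\lambda, \expstd_\mu\langle n\rangle)$-groups and reducing their computation via the adjunctions above to the Springer resolution, where the highest weight structure of $\mathrm{ExCoh}^{G \times \gm}(\spr)$ forces vanishing unless $\mu = \lambda$. Once the filtrations are in place, the remaining $\Ext^i$-vanishing axioms for $i \geq 1$ among the four families and finite length of all objects follow formally from the properly stratified package of \cite{M}, and $\gm$-grading compatibility is inherited throughout from $\gm$-equivariance of every construction.
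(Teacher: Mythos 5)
You have the roles of \emph{proper} and \emph{true} (co)standards reversed, and this is not a cosmetic relabeling: it makes the central step of your argument fail. The pushforward $\upmu{P*}\exstd(\lambda)$ is the \emph{proper} standard, not the true one. By Lemma~\ref{lem:stdtostd} the functor $\upmu{I*}$ collapses an entire $W_I$-orbit of standards on $\spr$ to a single object up to a grading twist; the resulting object $\expstd_I(\lambda)$ has $\exirr_I(\lambda)$ as head with graded multiplicity one and all other composition factors strictly below, which is precisely the characterization of a proper standard. It is \emph{not} projective in $\mathrm{ExCoh}^{G\times\gm}(\spr^I)_{\leq\lambda}$: the group $\underline{\Ext}^1(\expstd_I(\lambda),\expstd_I(\lambda))$ is in general nonzero, and this is exactly what makes the heart properly stratified rather than highest weight. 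If, as you propose, you declare $\upmu{P*}\exstd(\lambda)$ to be the true standard and then extract a ``maximal quotient with prescribed composition factors,'' that recipe returns the pushforward itself, so your true and proper standards coincide; you would then be proving (incorrectly) that the heart is highest weight, and the filtration axiom (4) of Definition~\ref{def:propstr} becomes vacuous rather than being verified.

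The genuinely new ingredient you are missing is the construction of the true (co)standards. The paper builds them out of the vector bundles $\mathcal{V}_I(\lambda)=\pi_I^*\vb_{G/P_I}(N_I(\lambda))$ attached to the dual Weyl modules $N_I(\lambda)$ of the Levi, setting $\exstd_I(\lambda)=\Pi^L_\lambda\Pi_\lambda(\mathcal{V}_I(\lambda)\la-\delta_\lambda\ra)$ and $\excos_I(\lambda)=\Pi^R_\lambda\Pi_\lambda(\mathcal{V}_I(\lambda)\la-2\delta^I_{w_I\lambda}-\delta_\lambda\ra)$, where $\Pi_\lambda$ is the Verdier quotient functor to $\mathrm{D}^I_{\leq\lambda}/\mathrm{D}^I_{<\lambda}$. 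Projectivity of $\exstd_I(\lambda)$ is then read off from the key $\Ext$-computation of Lemma~\ref{lem:vectorbundlehelp}(2), $\Hom^n(\mathcal{V}_I(\lambda),\expcos_I(\lambda)\la\delta_\lambda\ra)=0$ for $n\neq0$, together with the vanishing of $\Ext$'s into $\mathrm{E}^I_{<\lambda}$ coming from the quotient. The filtration of $\exstd_I(\lambda)$ by twists of $\expstd_I(\lambda)$ is inherited from the weight filtration of $N_I(\lambda)$ (all of whose $W_I$-extremal weights give the same proper standard after applying $\Pi^L_\lambda\Pi_\lambda$, again by Lemma~\ref{lem:stdtostd}). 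Your plan to reduce $\Ext^1$-computations to $\spr$ via the adjunctions $(\upmu{I}^*,\upmu{I*})$ and $(\upmu{I*},\upmu{I}^!)$ does not close the argument, because $\upmu{I}^!\upmu{I*}$ is far from the identity and the relevant self-$\Ext$'s on $\spr^I$ do not vanish; the point is precisely that the induced $\mathbb{P}^1$-fibers along $\upmu{I}$ create the extra extensions that distinguish proper from true standards.
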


\subsection{Relation with constructible sheaves} The definitions of highest weight and properly stratified categories imply that there exists classes of \textit{tilting} objects $\tilt(\spr)$ and $\tilt(\nilp)$. Moreover, there is an equivalence of additive categories between $\tilt(\spr)$ and Iwahori constructible \textit{parity sheaves} on the affine Grassmannian of the Langlands dual group $\gr_{G^\vee}$ \cite{abg, ARd, MR2}. There is also an equivalence of additive categories between $\tilt(\nilp)$ and \textit{parity sheaves} on $\gr_{G\vee}$ that are constructible along spherical orbits \cite{g, ARd2}. We expect but do not prove a similar result holds for $\tilt(\spr_P)$: associated to $P\subset G$ is a parahoric $\mathscr{P}^\vee\subset G^\vee(\mathbb{C}[[t]])$ such that $\tilt(\spr^P)$ is equivalent to parity sheaves on $\gr_{G^\vee}$ that are constructible with respect to $\mathscr{P}^\vee$-orbits. This would imply that the subcategory of perfect complexes $\der_{\mathrm{perf}}\coh^{G \times \mathbb{G}_m}(\spr^P)$ is equivalent to the mixed modular derived category $\dmix_{(\mathscr{P})}(\gr_{G^\vee})$ as defined in \cite{AR2}. 

\subsection{Other exotic t-structures} As already mentioned in this introduction, the bounded derived categories of equivariant coherent sheaves on both the nilpotent cone and the Springer resolution admit an exotic t-structure. Another related space that falls into this framework is the cotangent bundle of a partial flag variety, $T^*(G/P)$. This example has been studied in \cite{ACR}. That paper defines an exotic t-structure in this setting, and proves that the heart is highest weight. Furthermore, it is explained how this category is related to a category of Whittaker perverse sheaves on the Langlands dual affine Grassmannian. 

\subsection{Relation with algebraic geometry} For a smooth algebraic variety $X$, it is of interest to determine when $\der\coh(X)$ admits a full exceptional collection. In case $X$ is singular, this is impossible, but one might still hope to find a \textit{semi-orthogonal decomposition}. See for instance \cite{BoO, K, KKS} for some results and related discussion. Our quasi-exceptional collection gives a semi-orthogonal decomposition for $\der\coh^{G \times \mathbb{G}_m}(\spr^P)$. 

\subsection{Contents} In Section \ref{sec:background}, we collect the necessary notation and recall needed results for defining the exotic t-structure on the Springer resolution. In Section \ref{sec:qexcp}, we prove the pushforward of the exceptional collection on $\spr$ to the partial resolution defines a quasi-exceptional collection (see Proposition \ref{prop:cosqex}). Together with the dual collection, this allows definition of the exotic t-structure, see Theorem \ref{thm:exotict-str}. In Section \ref{Sec: heart}, we recall the notion of a graded properly stratified category and prove Theorem \ref{thm:propstr}, that the category $\mathrm{ExCoh}^{G \times \mathbb{G}_m}(\spr^P)$ is graded properly stratified.

\subsection{Acknowledgements} We would like to thank Pramod Achar for helpful discussions regarding the content of this paper.

\section{Background }\label{sec:background}

\subsection{Notation}\label{subsec:notation}
Fix an algebraically closed field $\Bbbk$, and let $G$ be a connected, reductive group over $\Bbbk$. Fix a maximal torus and ``negative" Borel subgroup $T \subseteq B \subseteq G$.  Let $\Phi$ denote the set of roots, and $\Sigma$ the set of simple roots. Let $\wts$ denote the character lattice of $T$, and $W$ denote the Weyl group of $G$. We assume throughout that $G/\Bbbk$ satisfy 
\begin{enumerate}
\item $\Lie(G)$ admits a nondegenerate $G$-invariant bilinear form, and
\item the derived subgroup of $G$ is simply connected.
\end{enumerate}

Note that condition (1) is implied by $\Bbbk$ having \textit{very good} characteristic with respect to $G$ by \cite[Proposition 2.5.12]{L}. Condition (2) implies existence of $\varsigma\in\wts$ with the property that $\la\varsigma, \alpha^\vee\ra = 1$ for all $\alpha\in\Sigma$. These two conditions also allow us to apply results of \cite[Section 9]{AR}. (Although \cite{AR} assumes throughout that the characteristic is bigger than the Coxeter number, \cite[Remark 9.1]{AR} says our conditions (1) and (2) suffice for the results we reference.) 

\subsection{Parabolic data}For each $I\subset\Sigma$, we get a corresponding root system $\Phi_I = \Phi \cap \mathbb{Z}I$ and positive roots $\Phi^+_I = \Phi^+\cap\Phi_I$. Let $W_I\subset W$ denote the subgroup generated by simple reflections indexed by $I$, and $w_I$ denotes the longest element in $W_I$. Let $P_I\subset G$ denote the parabolic subgroup containing $B$ associated to $I$ with Lie algebra $\mathfrak{p}_I$, so that
\[\mathfrak{p}_I = \mathfrak{b}\oplus \bigoplus_{\alpha\in\Phi^+_I} \mathfrak{g}_\alpha.\] Let $L_I$ denote the Levi factor of $P_I$ containing $T$, $\mathfrak{l}_I$ its Lie algebra, $U_I$ the unipotent radical of $P_I$, and $\mathfrak{u}_I$ its Lie algebra. We define $\wts^+_I$ as the set of characters dominant with respect to $I$. That is, $\wts^+_I = \{\lambda\in\wts | \la\lambda, \alpha^\vee\ra \geq0 \textup{ for all } \alpha\in I\}$.

Throughout we are primarily concerned with the variety defined as \[\spr^I = G \times^{P_I} (\mathfrak{p}_I \cap \mathcal{N}),\] which partially resolves the singularities of the nilpotent cone $\nilp$. We study the bounded derived category of $G\times\gm$-equivariant coherent sheaves on $\spr^I$ denoted $\der\coh^{G\times\gm}(\spr^I)$. We simplify notation for the following special cases. For $I = \Sigma$, the partial resolution identifies with the nilpotent cone $\spr^{\Sigma} = \nilp$. In case $I = \emptyset$, we get the Springer resolution $\spr$. 

Define an action of the multiplicative group $\gm$ on $\nilp$ by letting $z\cdot x = z^{-2}x$ for $x\in\nilp$. By restricting the $\gm$ action on $\nilp$, we get an action on $\mathfrak{p}_I \cap \mathcal{N}$ for any $I$, hence on the partial resolution $\spr^I$ as well. We also regard the flag varieties $G/B$ and $G/P_I$ as $G\times\gm$ variety by letting $\gm$ act trivially. We have a \textit{twist the grading} functor 
\[\la 1\ra : \der\coh^{G\times\gm}(\spr^I)\rightarrow \der\coh^{G\times\gm}(\spr^I)\] defined by tensoring with the tautological 1-dimensional $\gm$-bundle. 

The partial resolution factors the Springer resolution, so we have natural $G\times\gm$-equivariant maps 
\[\upmu{I}:\spr \rightarrow \spr^I \qquad \text{and} \qquad \mu_I:\spr^I \rightarrow \mathcal{N}.\] We also use natural maps as indicated in the commutative diagram
 \begin{equation}\label{cd:maps}\begin{tikzcd}
  \spr \arrow[r, "\pi"] \arrow[d, "\upmu{I}"]
    & G/B \arrow[d, "p"] \\
  \spr^I \arrow[r, "\pi_I"]
&G/P_I .\end{tikzcd}\end{equation} We denote by \[\vb_{G/B}: \rep(B)\rightarrow\coh^G(G/B) \textup{ and }\vb_{G/P_I}: \rep(P_I)\rightarrow\coh^G(G/P_I) \] the functors that assign the associated vector bundle to any representation, and note that each is an equivalence of categories. We will usually regard the output as being $G\times\gm$-equivariant where $\gm$ acts trivially. For each $\lambda\in\wts$, we get an associated $G\times\gm$ equivariant line bundle on $\spr$, denoted $\str_{\spr}(\lambda) := \pi^*(\vb_{G/B}(\Bbbk_\lambda))$. Let $\str_{\spr^I}$ denote the structure sheaf of $\spr^I$. 

\subsection{Orders on $\wts$} The usual partial order on $\wts$ is denoted by $\wtles$, and we will also use the partial order $\wtles_I$. That is, \[\lambda \wtles\mu \textup{ iff } \mu-\lambda\in\mathbb{Z}_{\geq0}\Phi^+ \textup{ and } \lambda \wtles_I\mu \textup{ iff } \mu-\lambda\in\mathbb{Z}_{\geq0}\Phi_I^+.\] For each $\lambda\in\wts$, let $\dm(\lambda)$ be the unique element in $W\lambda\cap\wts^+$, and let $\dm_I(\lambda)$ be the unique element in $W_I\lambda\cap\wts_I^+$. For $\lambda\in\wts$, we define a subset $\conv(\lambda)\subset\wts$ by the condition \[\mu\in\conv(\lambda) \textup{ if and only if } \dm(\mu)\wtles \dm(\lambda),\] and set $\conv^0(\lambda) := \conv(\lambda)\backslash W\lambda$. Similarly, define the subset $\conv_I(\lambda)\subset\wts$ by \[\mu\in\conv_I(\lambda) \textup{ if and only if }\dm_I(\mu)\wtles \dm_I(\lambda),\] and let $\conv^0_I(\lambda) := \conv_I(\lambda)\backslash W_I\lambda$. 
In order to define our exotic t-structure, we use a different ordering of $\wts$ whose definition is related to the Bruhat order on the affine Weyl group. We define $\leq$ on $\wts$ as a completion of the Bruhat order chosen to be ``compatible" with restriction to $\wts^+_I$. It is explained how to make such a choice in \cite[Subsection 9.4]{AR}; note that in loc. cit., they use $\leq'$ to denote this order and reserve $\leq$ for the partial order defined using Bruhat order. Moreover, note that $\lambda\in\conv^0(\mu)$ or $\lambda\in\conv_I^0(\mu)$ implies $\lambda\leq \mu$ by \cite[Lemma 9.15]{AR} and that $(\wts^+_I, \leq)$ is isomorphic as an ordered set to $(\mathbb{Z}_{\geq 0}, \leq)$.


\subsection{The Springer resolution}
From now on, we assume $I\neq\varnothing$. More specifically, we do not reprove existence of the exotic t-structure for the Springer resolution, but instead show how its existence and properties implies existence of an exotic t-structure for the partial resolution. For a subset $\mathcal{S}\subseteq \wts$, let $\mathrm{D}_{\mathcal{S}}$ denote the full triangulated subcategory of $\der\coh^{G\times\gm}(\spr)$ generated by line bundles $\str_{\spr}(\mu)\la m \ra$ with $\mu\in\mathcal{S}$ and $m\in\mathbb{Z}$. Given $\lambda\in\wts$, we may define the sets $\mathcal{S} = \{\mu | \mu\leq\lambda \}$ and $\mathcal{S}' = \{\mu | \mu<\lambda \}$. In these cases, we use the notation $\mathrm{D}_{\leq \lambda} = \mathrm{D}_{\mathcal{S}}$ and $\mathrm{D}_{< \lambda}= \mathrm{D}_{\mathcal{S}'}$.

For each $\lambda\in\wts$, let $\delta_\lambda$ denote the minimal length of an element $w\in W$ such that $w(\lambda)\in\wts^+$. The following proposition summarizes the key properties that allow one to define the exotic t-structure on $\der\coh^{G\times\gm}(\spr)$. See \cite[Section 2.1.5]{BEZ}. See also \cite[2.3, 2.4]{MR} where positive characteristic is considered. For the $\gm$-action, we normalize our objects as in \cite[Proposition 9.16]{AR}.
\begin{prop}\label{prop:excosstd}
There are objects $\excos(\lambda), \exstd(\lambda)$ in $\der\coh^{G\times\gm}(\spr)$ that are uniquely determined (up to isomorphism) by the following two properties:
\begin{enumerate}
\item there exist distinguished triangles
\begin{equation}\label{eq:sprexcos}\mathcal{F}\rightarrow \str_{\spr}(\lambda)\la-\delta_{\lambda}\ra\rightarrow \excos(\lambda)\rightarrow\end{equation}
\begin{equation}\label{eq:sprexstd}\exstd(\lambda) \rightarrow \str_{\spr}(\lambda)\la-\delta_{\lambda}\ra\rightarrow\mathcal{F}'\rightarrow\end{equation}
with $\mathcal{F}\in\mathrm{D}_{\conv^0(\lambda)}, \mathcal{F'} \in \mathrm{D}_{< \lambda}$, and
\item for all $\mathcal{G}\in  \mathrm{D}_{< \lambda}$, we have $\Hom(\mathcal{G}, \excos(\lambda)) = \Hom(\exstd(\lambda), \mathcal{G}) = 0$.
\end{enumerate}
\end{prop}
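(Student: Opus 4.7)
The plan is to construct $\excos(\lambda)$ and $\exstd(\lambda)$ by an iterative truncation procedure in $\der\coh^{G\times\gm}(\spr)$, using that the line bundles $\str_{\spr}(\mu)\la m\ra$ form the building blocks of a (quasi-)exceptional collection once ordered by $\leq$ on $\wts$ together with the grading shift. The existence statement is an instance of the standard mutation construction, while uniqueness is formal from (2).

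First I would collect the cohomological input that underwrites everything: a graded Hom-vanishing of the form
\[
\Hom^\bullet\bigl(\str_{\spr}(\mu)\la m\ra,\, \str_{\spr}(\lambda)\la -\delta_\lambda\ra\bigr) = 0
\]
for appropriate $(\mu,m)$ relative to $(\lambda, -\delta_\lambda)$, together with the dual assertion. This is exactly the content of \cite[Prop.~9.16]{AR} under our assumptions on $G$, and it is responsible both for the normalization by $\la -\delta_\lambda\ra$ and for the fact that the partial orders $\conv^0(\lambda)$ and $\conv^0_I(\lambda)$ are compatible with the refined total order $\leq$ via \cite[Lemma 9.15]{AR}.

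For existence of $\excos(\lambda)$, I proceed by a descending induction on the finite subset of line bundles that could contribute: starting from $\mathcal{C}_0 := \str_{\spr}(\lambda)\la -\delta_\lambda\ra$, whenever there is a nonzero map $\str_{\spr}(\mu)\la m\ra \to \mathcal{C}_i$ with $\mu \in \conv^0(\lambda)$, I replace $\mathcal{C}_i$ by the cone of that map, pushing the obstruction into a preceding stage of $\mathrm{D}_{\conv^0(\lambda)}$. A standard finiteness argument in equivariant coherent cohomology (bounded weights in each cohomological degree, combined with the vanishing above) shows the procedure stabilizes at an object $\excos(\lambda)$ fitting into \eqref{eq:sprexcos} with $\mathcal{F} \in \mathrm{D}_{\conv^0(\lambda)}$, and having $\Hom(\str_{\spr}(\mu)\la m\ra,\excos(\lambda))=0$ for all $\mu < \lambda$, whence $\Hom(\mathcal{G},\excos(\lambda))=0$ for all $\mathcal{G}\in\mathrm{D}_{<\lambda}$. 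The construction of $\exstd(\lambda)$ is strictly dual, building a left approximation by objects of $\mathrm{D}_{<\lambda}$.

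Uniqueness is then formal: given two candidates $\excos(\lambda), \excos'(\lambda)$, the composition $\str_{\spr}(\lambda)\la -\delta_\lambda\ra \to \excos'(\lambda)$ extends uniquely over $\str_{\spr}(\lambda)\la -\delta_\lambda\ra \to \excos(\lambda)$, because the obstruction and the ambiguity lie in $\Hom^*(\mathcal{F},\excos'(\lambda))$ with $\mathcal{F}\in\mathrm{D}_{\conv^0(\lambda)}\subseteq \mathrm{D}_{<\lambda}$, which vanishes by (2); the resulting morphism $\excos(\lambda)\to\excos'(\lambda)$ is an isomorphism by the $3\times 3$ lemma applied to the two triangles in (1), and symmetrically for $\exstd(\lambda)$. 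The main obstacle of the argument is the Ext-vanishing underlying step one — choosing the correct normalization $\la -\delta_\lambda\ra$ so that the line bundles genuinely exceptional in the prescribed order — which in positive characteristic requires the technical hypotheses on $G$ assumed in Subsection~\ref{subsec:notation}; once this input is in place, the rest of the argument is the usual calculus of semi-orthogonal decompositions.
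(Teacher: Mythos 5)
The paper does not actually prove this proposition — it explicitly defers, pointing the reader to \cite[Section~2.1.5]{BEZ}, to \cite[2.3, 2.4]{MR} for positive characteristic, and to \cite[Proposition~9.16]{AR} only for the choice of $\gm$-normalization $\la-\delta_\lambda\ra$. So there is no proof in the paper to compare against line by line; what you have done is reconstruct the argument underlying those references, and your reconstruction is the right one in outline: line bundles $\str_\spr(\mu)\la m\ra$ form a graded (quasi-)exceptional collection ordered by $\leq$, one builds $\excos(\lambda)$ by right mutation across $\mathrm{D}_{\conv^0(\lambda)}$ (equivalently as $\Pi^R_{<\lambda}\Pi_{<\lambda}$ applied to $\str_\spr(\lambda)\la-\delta_\lambda\ra$), and uniqueness is a formal $3\times3$-lemma argument from the Hom-vanishing in part~(2).

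Two cautions worth registering. First, your attribution is slightly off: \cite[Proposition~9.16]{AR} is only responsible for fixing the normalization shift, not for the exceptionality Ext-vanishing itself; the latter is the substance of \cite[2.1.5]{BEZ} and \cite[2.3--2.4]{MR}, and \cite[Lemma~9.15]{AR} is what guarantees $\conv^0(\lambda)\subset\{\mu\mid\mu<\lambda\}$. Second, your stabilization step is the weakest link as written: $\conv^0(\lambda)$ is finite, but the relevant generating objects are $\str_\spr(\mu)\la m\ra$ with $m$ ranging over all of $\mathbb{Z}$, so ``bounded weights in each cohomological degree'' does not by itself close the loop; one needs the graded Hom-finiteness built into the graded exceptional collection formalism (as set up in \cite{BEZ2} and \cite[Section~2]{A2}, and invoked later in this paper's own Proposition~\ref{prop:cosqex}). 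Finally, the two triangles in part~(1) are not ``strictly dual'': the costandard one requires $\mathcal{F}\in\mathrm{D}_{\conv^0(\lambda)}$, a genuinely smaller subcategory than $\mathrm{D}_{<\lambda}$, while the standard one only asks $\mathcal{F}'\in\mathrm{D}_{<\lambda}$. This asymmetry is used later in the paper (e.g.\ in Lemma~\ref{lem:dtri}), so it is worth being explicit that the standard and costandard constructions are not obtained from one another by simply reversing arrows.
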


The objects $(\excos(\lambda), \lambda\in\wts)$ form a graded exceptional collection in the sense of \cite[2.3]{MR}. Moreover,  \cite[Proposition 2]{BEZ2} proves they define the \textit{exotic} t-structure on $\der\coh^{G\times\gm}(\spr)$. The heart is a graded highest weight category. Let $\exirr(\lambda)$ denote the image of the natural map $\exstd(\lambda)\rightarrow\excos(\lambda)$. Then all irreducible exotic sheaves are of the form $\exirr(\lambda)\la m\ra$, $\lambda\in\wts, m\in\mathbb{Z}$.
For any $m\in\mathbb{Z}$, we call $\excos(\lambda)\la m\ra$ a \textit{costandard} exotic sheaf and $\exstd(\lambda)\la m\ra$ a \textit{standard} exotic sheaf. 

\section{Case of a partial resolution}\label{sec:qexcp}

\subsection{Generalized Andersen--Jantzen sheaves} 
For each $\lambda\in\wts$, let $A_I(\lambda) = \upmu{I*} \str_{\spr}(\lambda)$. We call these objects \textit{generalized Andersen--Jantzen sheaves}. For a subset $\mathcal{S}\subseteq \wts^+_I$, let $\mathrm{D}^I_{\mathcal{S}}$ denote the full triangulated subcategory of $\der\coh^{G\times\gm}(\spr^I)$ generated by generalized Andersen--Jantzen sheaves $A_I(\mu)\la m \ra$ with $\mu\in\mathcal{S}$ and $m\in\mathbb{Z}$. Given $\lambda\in\wts^+_I$, we may define the sets $\mathcal{S} = \{\mu\in\wts^+_I | \mu\leq\lambda \}$ and $\mathcal{S}' = \{\mu\in\wts^+_I | \mu<\lambda \}$. In these cases, we use the notation $\mathrm{D}^I_{\leq \lambda} = \mathrm{D}^I_{\mathcal{S}}$ and $\mathrm{D}^I_{< \lambda}= \mathrm{D}^I_{\mathcal{S}'}$. The following lemma shows that $\mathrm{D}^I_{<\lambda}$ already contains $A_I(\mu)$ for any $\mu\in\wts$ with $\dm_I(\mu)<\lambda$. Our proof is nearly identical to \cite[Lemma 5.3]{A2} suitably modified to handle that $\spr^I$ is not (generally) an affine variety.

 \begin{lemma}\label{lem:waj} Let $\lambda, \mu\in\wts$ satisfy $W_I \lambda = W_I\mu$ and $\mu\leq\lambda$. Then there is a distinguished triangle 

\[A_I(\mu)\la-2\ell \ra\rightarrow A_I(\lambda) \rightarrow \mathcal{H}\rightarrow\]
where $\ell$ is the length of the minimal $w\in W_I$ so that $w(\mu) = \lambda$ and $\mathcal{H}\in\mathrm{D}^I_{\conv^0_I(\lambda)}$.
\end{lemma}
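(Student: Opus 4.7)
The plan is to argue by induction on $\ell$, following the scheme of \cite[Lemma 5.3]{A2}.

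For $\ell=0$ the claim is trivial (take $\mathcal{H}=0$). For the inductive step, I would write a reduced expression $w = s_\alpha w'$ with $\alpha\in I$ simple and $\ell(w')=\ell-1$, and set $\mu' := w'\mu$, so that $\lambda = s_\alpha\mu'$ with $\la\mu',\alpha^\vee\ra < 0$. Observe that $\mu',\lambda\in W_I\mu$, hence they have the same $W_I$-dominant representative; in particular $\conv^0_I(\mu') = \conv^0_I(\lambda)$. The inductive hypothesis applied to the pair $(\mu,\mu')$ yields a triangle
\[A_I(\mu)\la-2(\ell-1)\ra \to A_I(\mu') \to \mathcal{H}' \to, \qquad \mathcal{H}' \in \mathrm{D}^I_{\conv^0_I(\lambda)}.\]
Provided one can produce the $\ell=1$ triangle
\[A_I(\mu')\la-2\ra \to A_I(\lambda) \to \mathcal{H}'' \to, \qquad \mathcal{H}''\in\mathrm{D}^I_{\conv^0_I(\lambda)},\]
the octahedral axiom applied to the composition $A_I(\mu)\la-2\ell\ra \to A_I(\mu')\la-2\ra \to A_I(\lambda)$ yields a triangle whose cone is an extension of $\mathcal{H}''$ by $\mathcal{H}'\la-2\ra$, and hence lies in $\mathrm{D}^I_{\conv^0_I(\lambda)}$.

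The heart of the proof is thus the $\ell=1$ case: $\lambda = s_\alpha\mu'$ with $\alpha\in I$ simple and $m := -\la\mu',\alpha^\vee\ra > 0$. Here the key geometric input is the $B$-stable subspace $\mathfrak{u}_\alpha\subset\mathfrak{u}$, which is well-defined precisely because $\alpha$ is simple. This yields a codimension-one $G$-stable subvariety $Z_\alpha := G\times^B\mathfrak{u}_\alpha \subset \spr$ cut out by a $B$-semi-invariant linear function of weight $-\alpha$ and appropriate $\gm$-weight, so that the Koszul short exact sequence
\[0 \to \str_\spr(\mu')\la-2\ra \to \str_\spr(\mu'+\alpha) \to \str_{Z_\alpha}(\mu'+\alpha) \to 0\]
(and, when $m>1$, its suitably twisted iterates) provides a distinguished triangle on $\spr$ relating $\str_\spr(\mu')\la-2\ra$ and $\str_\spr(\lambda)$, with cone supported on $Z_\alpha$. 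Pushing forward along $\upmu{I}$ and analyzing the cone via the $\mathbb{P}^1$-bundle $Z_\alpha \to G\times^{P_\alpha}\mathfrak{u}_\alpha$, together with Borel--Weil--Bott on the fiber $P_\alpha/B\cong\mathbb{P}^1$, should produce the desired triangle on $\spr^I$.

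The main obstacle will be adapting Achar's argument to our non-affine setting: in \cite{A2} the affineness of $\nilp$ is exploited at a key step (to compute certain $\Hom$ groups via global sections), but $\spr^I$ carries no such structure. The remedy is to pass through the flag bundle projection $\pi_I:\spr^I\to G/P_I$ and work relative to $G/P_I$, so that the global computation is replaced by a bundle-theoretic one whose fibers do sit inside the Levi $L_I$'s nilpotent cone where affineness is available. A secondary technical point is bookkeeping the cones when $m>1$: naive iteration of the Koszul sequence gives a total shift of $-2m$ rather than $-2$, so one must exploit vanishing and duality for line bundles on the $\mathbb{P}^1$-fibers of $Z_\alpha \to G\times^{P_\alpha}\mathfrak{u}_\alpha$ to absorb the discrepancy into sheaves generated by $A_I(\nu)\la k\ra$ for $\nu\in\conv^0_I(\lambda)$, thereby reducing the effective shift after $\upmu{I*}$ to $-2$ as required.
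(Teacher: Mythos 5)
Your reduction to the $\ell = 1$ case via induction and the octahedral axiom is fine and matches the paper. Your geometric ingredients — the divisor $Z_\alpha = G\times^B\mathfrak{u}_\alpha$, the Koszul sequence for $Z_\alpha \subset \spr$, and the $\mathbb{P}^1$-bundle $Z_\alpha \to G\times^{P_\alpha}\mathfrak{u}_\alpha$ — are exactly the right ones, and your idea of working relative to $G/P_I$ to sidestep non-affineness is also the right instinct (the paper twists by ample $\str_{\spr^I}(\gamma)$ and computes global sections, which is a concrete version of this).

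The genuine gap is the $m > 1$ case, and it is not a ``secondary technical point'': the shift mismatch you flag ($-2m$ from iterating the Koszul sequence vs.\ the required $-2$) is structural, and your proposed remedy — ``exploit vanishing and duality for line bundles on the $\mathbb{P}^1$-fibers to absorb the discrepancy'' — is not an argument. Nothing about Borel--Weil--Bott or Serre duality on the fibers will change a total $\gm$-shift of $-2m$ into $-2$, and you also give no mechanism for showing the accumulated cone lands in $\mathrm{D}^I_{\conv^0_I(\lambda)}$. What the paper does instead is replace iterated Koszul sequences by a single cleverly chosen $B$-module. Using $\varsigma$ with $\la\varsigma,\beta^\vee\ra = 1$ for all simple $\beta$, set $Q := \bbk_{\varsigma-\alpha}\otimes\res{P_\alpha}{B}\ind{P_\alpha}{B}\bbk_{\lambda-\varsigma}$; the tensor identity and $R\ind{P_\alpha}{B}\bbk_{\varsigma-\alpha} = 0$ give $R\ind{P_\alpha}{B}Q = 0$. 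The weights of $Q$ are precisely $\lambda-\alpha,\ldots,\lambda-m\alpha = s\lambda$, so $\bbk_{s\lambda}\hookrightarrow Q$ and $Q\otimes\bbk_\alpha \twoheadrightarrow\bbk_\lambda$ both have (co)kernel supported on $\conv^0_I(\lambda)$, and then one needs only the \emph{single} Koszul step $0\to\str_\spr(\alpha)\la 2\ra\to\str_\spr\to i_*\str_{\spr_\alpha}\to 0$, tensored with $\pi^*\vb_{G/B}(Q)$, to compare $F(Q)$ with $F(Q\otimes\bbk_\alpha)\la 2\ra$. The vanishing $\upmu{I*}(i_*\str_{\spr_\alpha}\otimes\pi^*\vb_{G/B}(Q)) = 0$ then follows from $R\ind{P_\alpha}{B}Q = 0$ after twisting by ample bundles and applying the projection formula. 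In short: the module $Q$ is the missing idea; it packages all the intermediate weights so that the $\gm$-shift is always exactly $2$ regardless of $m$, and its induction-vanishing is what makes the cone disappear after $\upmu{I*}$.
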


\begin{proof} Clearly it suffices to prove the case $\mu = s\lambda$ for simple reflection $s\in W_I$. Suppose $s$ corresponds to the simple root $\alpha\in I$, and let $n = \la \lambda, \alpha^\vee \ra >0$. $P_\alpha$ denotes the minimal parabolic subgroup corresponding to $\alpha$. Recall $\varsigma\in\wts$ has the property that $\la\varsigma, \beta^\vee\ra = 1$ for all simple roots $\beta$. Since $\la\varsigma-\alpha, \alpha^\vee\ra = -1$, \cite[II.5.2(b)]{J} implies $R\ind{P_\alpha}{B}\bbk_{\varsigma - \alpha} = 0$. Define the $B$-module $Q := \bbk_{\varsigma - \alpha}\otimes \res{P_\alpha}{B}\ind{P_\alpha}{B} \bbk_{\lambda-\varsigma}.$ Using the derived tensor identity along with the above vanishing, we have  \begin{equation}R\ind{P_\alpha}{B} Q\cong R\ind{P_\alpha}{B}\bbk_{\varsigma - \alpha} \otimes^L R\ind{P_\alpha}{B}\bbk_{\lambda - \varsigma} = 0.\label{eqvan2}\end{equation}

 Now we study the $B$-module structure of the relevant representations. Since $\la \lambda -\varsigma, \alpha^\vee \ra = n-1$, the weights of $\ind{P_\alpha}{B} \bbk_{\lambda-\varsigma}$ are $\lambda - \varsigma, \lambda - \varsigma - \alpha, \ldots,  \lambda - \varsigma - (n-1)\alpha$. Thus, the weights of $Q$ are $\lambda - \alpha, \lambda - 2\alpha, \ldots,  \lambda - n\alpha = s\lambda.$ In particular, there are short exact sequences of $B$-modules \[0\rightarrow \bbk_{s\lambda}\rightarrow Q \rightarrow K_1\rightarrow 0 \textup{ and }0\rightarrow K_2\rightarrow Q\otimes \bbk_{\alpha} \rightarrow \bbk_{\lambda}\rightarrow 0,\] and weights of $K_1$ and $K_2$ are in $\conv^0_I(\lambda)$. Application of $F = \upmu{I*}\circ p^*\circ\vb_{G/B}$ to the above sequences (and twisting the second) give distinguished triangles \[A_I(s\lambda)\rightarrow F(Q) \rightarrow \mathcal{K}_1\rightarrow \textup{ and }\mathcal{K}_2\rightarrow F(Q\otimes \bbk_{\alpha})\la2\ra \rightarrow A_I(\lambda)\la2\ra\rightarrow\] with $\mathcal{K}_1,\mathcal{K}_2\in\mathrm{D}^I_{\conv^0_I(\lambda)}.$ Hence, the lemma holds once we show there is an isomorphism \begin{equation}\label{eq:iso}F(Q)\cong F(Q\otimes \bbk_{\alpha})\la2\ra.\end{equation}

Let $\mathfrak{u}_\alpha$ denote the Lie algebra of the unipotent radical of $P_\alpha$, and let $\spr_{\alpha} = G\times^B \mathfrak{u}_\alpha$. There is a closed embedding $i: \spr_{\alpha}\rightarrow \spr$ and related short exact sequence in $\coh^{G\times\gm}(\spr)$ \[0\rightarrow\str_{\spr}(\alpha)\la2\ra\rightarrow\str_{\spr}\rightarrow i_*\str_{\spr_\alpha}\rightarrow 0.\] Under equivariant induction equivalence $\coh^{G\times\gm}(\spr)\cong\coh^{B\times\gm}(\mathfrak{u})$, this corresponds to the short exact sequence  of $B\times\gm$ equivariant $\bbk[\mathfrak{u}]$ modules \[0\rightarrow\bbk[\mathfrak{u}]\otimes\bbk_\alpha\la2\ra\rightarrow\bbk[\mathfrak{u}]\rightarrow \bbk[\mathfrak{u}_\alpha]\rightarrow 0.\] Tensor the above short exact sequence by $Q$ and apply equivariant induction equivalence $\coh^{G\times\gm}(\spr)\cong\coh^{B\times\gm}(\mathfrak{u})$ to obtain a distinguished triangle  \begin{equation}\pi^*\vb_{G/B}(Q\otimes\bbk_\alpha)\la 2\ra\rightarrow \pi^*\vb_{G/B}(Q)\rightarrow i_*\str_{\spr_\alpha}\otimes \pi^*\vb_{G/B}(Q)\rightarrow. \label{eqvan4}\end{equation} In order to get isomorphism \eqref{eq:iso}, we will show $\upmu{I*}( i_*\str_{\spr_\alpha}\otimes \pi^*\vb_{G/B}(Q)) = 0$. Now, $\spr^I$ is not affine, but it is a fiber bundle  with affine fibers over $G/P_I$. Consider the set \[\Pi = \{\lambda\in \wts | \la\lambda, \alpha^\vee\ra = 0 \textup{ for all } \alpha\in I \textup{ and } \la\lambda, \alpha^\vee\ra > 0 \textup{ for all } \alpha\in S/I \};\] this is the set of weights that give rise to ample line bundles on $G/P_I$, see \cite[II.4.4, Remark (1)]{J}. By standard arguments, it suffices to prove that \[R\Gamma(\upmu{I*} (i_*\str_{\spr_\alpha}\otimes \pi^*\vb_{G/B}(Q))\otimes_{\spr^I} \str_{\spr^I}(\gamma)) = 0\] for $\gamma\in\Pi$ is sufficiently large. Of course, $R\Gamma\circ\upmu{I*} = R\Gamma\circ \pi_*$. The projection formula implies $\pi_*(i_*\str_{\spr_\alpha}\otimes \pi^*\vb_{G/B}(Q)\otimes \str_{\spr}(\gamma))\cong \vb_{G/B}(\bbk[\mathfrak{u}_\alpha]\otimes Q\otimes \bbk_{\gamma}).$ Moreover, $R\Gamma\vb_{G/B}(\bbk[\mathfrak{u}_\alpha]\otimes Q\otimes \bbk_{\gamma})\cong R\ind{G}{B}(\bbk[\mathfrak{u}_\alpha]\otimes Q\otimes \bbk_{\gamma})$.

Using tensor identity and \eqref{eqvan2}, we obtain the vanishing $R\ind{P_\alpha}{B}(\bbk[\mathfrak{u}_\alpha]\otimes Q\otimes \bbk_\gamma)  = \bbk[\mathfrak{u}_\alpha]\otimes R\ind{P_\alpha}{B}(Q) \otimes \bbk_\gamma = 0\label{eqvan2}$ for all $\gamma\in\Pi$. This completes the argument.\end{proof}

Next we explain how to get generalized Andersen--Jantzen sheaves from Andersen--Jantzen sheaves on the nilpotent cone of the Levi. For the remainder of this subsection, we let $P = P_I$ with Levi decomposition $P=LU_I$. Let $\nilp_L$ denote the nilpotent cone of the Levi factor $L$. The Levi decomposition yields $\nilp\cap\frak{p} = \nilp_L+\frak{u}_I$.  Let $m: \nilp\cap\frak{p}\rightarrow \nilp_L$ denote the obvious map. Note that the category $\der\coh^{P\times\gm}(\nilp_L)$ is generated as a triangulated category by it's own Andersen--Jantzen sheaves, $(A^{(L)}(\lambda)\la m\ra, \lambda\in\wts^+_I, m\in\mathbb{Z})$ by \cite[Lemma 5.9]{A2}. Note also that that argument only shows the $L$-equivariant case, but the same argument applies to the $P$-equivariant case since the $L$-equivariant irreducible perverse coherent sheaves are the same as the $P$-equivariant ones. Also, \cite[Lemma 5.3]{A2} shows that only those labelled by weights dominant for $L$ are needed.

\begin{lemma}\label{lem:inducedaj} \begin{enumerate}
\item For all $\lambda\in\wts$, $m^*(A^{(L)}(\lambda))$ corresponds to the generalized Andersen--Jantzen sheaf $A_I(\lambda)$ under the equivariant induction equivalence $ \der\coh^{G\times\gm}(\spr^I)\cong\der\coh^{P\times\gm}(\nilp\cap\frak{p})$. 
\item There is an isomorphism $\upmu{I*}\str_{\spr} \cong \str_{\spr^I}$, and the sheaf $\str_{\spr^I}$ is an equivariant dualizing complex on $\spr^I$. \end{enumerate}
\end{lemma}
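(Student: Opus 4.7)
The proof has two parts. For (1), the plan is to realize both sides as pushforwards through a Cartesian square and apply flat base change. Under the equivariant induction equivalence $\der\coh^{G\times\gm}(\spr^I)\cong\der\coh^{P\times\gm}(\nilp\cap\mathfrak{p})$, the functor $\upmu{I*}$ corresponds to $\phi_*$ for the map $\phi\colon P\times^B\mathfrak{u}\to\nilp\cap\mathfrak{p}$ given by $[p,x]\mapsto p\cdot x$. The $B$-module surjection $\mathfrak{u}\twoheadrightarrow\mathfrak{u}/\mathfrak{u}_I\cong\mathfrak{u}_L$ (on which $U_I$ acts trivially) descends via the quotients $P\to L=P/U_I$ and $B\to B_L=B/U_I$ to a morphism $\psi\colon P\times^B\mathfrak{u}\to L\times^{B_L}\mathfrak{u}_L=\widetilde{\nilp_L}$. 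Using the Levi decomposition (for $p=lu\in P$ and $x=x_L+x_I\in\mathfrak{u}$, one computes $p\cdot x = \mathrm{Ad}(l)x_L + (\text{element of }\mathfrak{u}_I)$), I would verify that the square with sides $\phi,\psi,m,\mu_L$ commutes, and construct an explicit inverse to the natural map $P\times^B\mathfrak{u}\to\widetilde{\nilp_L}\times_{\nilp_L}(\nilp\cap\mathfrak{p})$ to show it is Cartesian. Since $\mathfrak{u}_I$ is $L$-stable, $m$ is a trivial vector bundle projection, hence flat, so flat base change applies. Combining with the observation that $\str_\spr(\lambda)$ corresponds to $\psi^*\str_{\widetilde{\nilp_L}}(\lambda)$ under equivariant induction (both factor through the line bundle associated to $\bbk_\lambda$ over $P/B = L/B_L$), we obtain $A_I(\lambda)\cong\phi_*\psi^*\str_{\widetilde{\nilp_L}}(\lambda)\cong m^*\mu_{L*}\str_{\widetilde{\nilp_L}}(\lambda)=m^*A^{(L)}(\lambda)$.

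For the first claim of (2), I would specialize part (1) to $\lambda=0$: the standard rational-singularities result $\mu_{L*}\str_{\widetilde{\nilp_L}}\cong\str_{\nilp_L}$ for the Springer resolution of $\nilp_L$ yields $A_I(0)\cong m^*\str_{\nilp_L}\cong\str_{\nilp\cap\mathfrak{p}}$, corresponding to $\str_{\spr^I}$ under equivariant induction.

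For the dualizing-complex claim, recall that $\spr\cong T^*(G/B)$ is smooth with trivial canonical bundle (up to shift and $\gm$-twist), so $\omega_\spr^\bullet\cong\str_\spr$ as $G\times\gm$-equivariant objects after appropriate normalization. Combining Grothendieck duality for the proper morphism $\upmu{I}$ with the first claim of (2), which says $\spr^I$ has rational singularities relative to $\upmu{I}$, we compute $\omega_{\spr^I}^\bullet\cong R\uHom(\upmu{I*}\str_\spr,\omega_{\spr^I}^\bullet)\cong\upmu{I*}R\uHom(\str_\spr,\omega_\spr^\bullet)\cong\upmu{I*}\omega_\spr^\bullet\cong\upmu{I*}\str_\spr\cong\str_{\spr^I}$ up to the same shift and twist, confirming that $\str_{\spr^I}$ is an equivariant dualizing complex.

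The principal obstacle is verifying the Cartesian property of the square in part (1). Because the decomposition $\mathfrak{u}=\mathfrak{u}_L\oplus\mathfrak{u}_I$ is only $B_L$-equivariant (the action of $U_I\subset B$ on $\mathfrak{u}_L$ is nontrivial in general), exhibiting the required isomorphism $P\times^B\mathfrak{u}\xrightarrow{\sim}\widetilde{\nilp_L}\times_{\nilp_L}(\nilp\cap\mathfrak{p})$ demands careful bookkeeping with the Levi decomposition of elements of $P$ and the $U_I$-action on $\mathfrak{u}$.
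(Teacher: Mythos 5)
Your argument is essentially the same as the paper's, just carried out on the $P$-equivariant side rather than the $G$-equivariant side. The paper works with the square
\[
\begin{tikzcd}
G\times^B\mathfrak{u} \arrow{r}{q_2}\arrow{d}{\upmu{I}} & G\times^B(\mathfrak{u}/\mathfrak{u}_I)\arrow{d}{\hat\nu}\\
G\times^P(\nilp\cap\mathfrak{p})\arrow{r}{\hat m} & G\times^P\nilp_L
\end{tikzcd}
\]
and asserts flat base change $\hat m^*\hat\nu_*\cong\upmu{I*}q_2^*$; applying $G\times^P(-)$ to your square yields exactly this one, so the Cartesian property you worry about for $P\times^B\mathfrak{u}\to\widetilde{\nilp_L}\times_{\nilp_L}(\nilp\cap\mathfrak{p})$ is precisely what is also needed (and taken for granted without comment) in the paper. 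Your explicit inverse $([l,z],y)\mapsto[l,\mathrm{Ad}(l^{-1})y]$ settles it, so this is a worthwhile addition rather than a gap. The paper pays a different price for avoiding this check: it must commute several induction equivalences past the sheaf functors $\nu_*$, $q_1^*$, and $\hat m^*$ via intermediate diagrams, and its chain of isomorphisms does roughly the same bookkeeping you do when you match up $\str_\spr(\lambda)$ with $\psi^*\str_{\widetilde{\nilp_L}}(\lambda)$. For part (2), you both reduce to $\mu_{L*}\str_{\widetilde{\nilp_L}}\cong\str_{\nilp_L}$; the paper then cites [BK, Lemmas 3.4.2, 5.1.1] to conclude $\str_{\spr^I}$ is an equivariant dualizing complex, while you derive it directly via Grothendieck duality for $\upmu{I}$ together with the $G\times\gm$-equivariant triviality of $\omega_\spr^\bullet$. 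The content is the same, but your duality argument is self-contained and arguably more transparent about where the conclusion comes from.
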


\begin{proof}
For part (1), we will use the maps as defined in the following commutative diagram. 

\[
  \begin{tikzcd}
    \nilp\cap\frak{p} \arrow{r}{i_2} \arrow{d}{m} & G \times^P (\nilp\cap\frak{p}) \arrow{d}{\hat m} \\
    \nilp_L \arrow{r}{i_1} & G \times^{P} \nilp_{L} 
  \end{tikzcd}
\]

The maps $m$ and $\hat m$ are induced from the quotient $\mathfrak{p}_I\rightarrow \mathfrak{l}_I$, and are affine bundle maps. The inclusions $i_1$ and $i_2$ are $P$-equivariant, and each induces the induction equivalence
\[(i_1)^*: \coh^G (G \times^{P} \nilp_{L}) \xrightarrow{\sim} \coh^{P} (\nilp_{L}) \textup{ and } (i_2)^*: \coh^G (G \times^{P} \nilp\cap\frak{p}) \xrightarrow{\sim} \coh^{P} (\nilp\cap\frak{p}).\]
Denote the inverse equivalences by $(i_1^*)^{-1}$ and $(i_2^*)^{-1}$. We clearly have an isomorphism of functors $\hat m^*(i_1^*)^{-1} \cong (i_2^*)^{-1}m^*$. Therefore it suffices to show an isomorphism of objects in $\der \coh^G( \spr^I)$,
\[A_I(\lambda) \cong \hat m^*(i_1^*)^{-1}(A^{(L)}(\lambda)).\] To get this isomorphism, we must commute a few more induction equivalences with appropriate sheaf functors. \[
  \begin{tikzcd}
    \spr_L\cong P\times^B(\frak{u}/\frak{u}_I) \arrow{r}{i_3} \arrow{d}{\nu} & G \times^B (\frak{u}/\frak{u}_I) \arrow{d}{\hat\nu} \hspace{.5cm} & P\times^B(\frak{u}/\frak{u}_I) \arrow{r}{i_3} \arrow{d}{q_1} & G \times^B (\frak{u}/\frak{u}_I) \arrow{d}{\hat q_1} \\
    \nilp_L \arrow{r}{i_1} & G \times^{P} \nilp_{L} \hspace{.5cm} &P/B \arrow{r}{i_4} & G \times^{P} P/B 
  \end{tikzcd}
\]

Note that the leftmost arrow identifies P-equivariantly with the Springer resolution for $\nilp_L$, and there is a $P$-equivariant isomorphism $P/B$ with $L$'s flag variety $L/(L\cap B)$. Moreover, we have isomorphisms $G\times^P(P/B)\cong G/B$, and $G \times^B (\frak{u}/\frak{u}_I)\cong G\times^P (P\times^B \frak{u}/\frak{u}_I)$. Using the same reasoning as above, we get isomorphisms of functors $(i_1^*)^{-1}\nu_*\cong \hat\nu_*(i_3^*)^{-1}$ and $(i_3^*)^{-1}q^*_1 \cong \hat q_1^*(i_4^*)^{-1}$. This yields the following string of isomorphisms:

\begin{align*} \hat m^*(i_1^*)^{-1}(A^{(L)}(\lambda)) &\cong  \hat m^*(i_1^*)^{-1}\nu_*(\str_{\spr_{L}}(\lambda))\\
										     &\cong  \hat m^*\hat\nu_*(i_3^*)^{-1}q_1^*(\vb_{P/B}(\Bbbk_{\lambda}))	\\	
										     &\cong  \hat m^*\hat\nu_*\hat q_1^*(i_4^*)^{-1}(\vb_{P/B}(\Bbbk_{\lambda}))\\		
										     &\cong  \hat m^*\hat\nu_*\hat q_1^*(\vb_{G/B}(\Bbbk_{\lambda})).
										     	\end{align*}
Next we must apply flat base change theorem with the maps in the next diagram.

\begin{equation}\label{eq:bcdiagram}
  \begin{tikzcd}
    G \times^B \mathfrak{u} \arrow{r}{q_2} \arrow{d}{\upmu{I}} & G \times^B (\mathfrak{u}/\mathfrak{u}_I) \arrow{d}{\hat\nu} \\
    G \times^{P} \nilp\cap\frak{p} \arrow{r}{\hat{m}} & G \times^{P} \nilp_{L} 
  \end{tikzcd}
\end{equation}

Flat base change theorem says $\hat m^*\hat\nu_*\cong \upmu{I*} q_2^*$. This combined with the final isomorphism above finish the proof. 

For the second statement, we recall that $\nu_*\str_{\spr_L}\cong \str_{\nilp_L}$. This follows from \cite[Lemmas 3.4.2 and 5.1.1]{BK} which imply in particular that $\str_{\spr_L}$ and $\str_{\nilp_L}$ are equivariant dualizing complexes on their corresponding varieties. Now, equivariant induction equivalence implies we also have an isomorphism $\hat\nu_*\str_{G\times^B(\mathfrak{u}/\mathfrak{u}_I)}\cong \str_{G\times^P\nilp_L}$. Flat base change with diagram \eqref{eq:bcdiagram} shows an isomorphism $\upmu{I*}\str_{\spr}\cong\str_{\spr^I}$, and  \cite[Lemmas 3.4.2 and 5.1.1]{BK} finish the argument.\end{proof}

\begin{prop}\label{prop:pushfor} The category $\der\coh^{G\times\gm}(\spr^I)$ is generated as a triangulated category by the image of the functor $\upmu{I*}: \der\coh^{G\times\gm}(\spr)\rightarrow \der\coh^{G\times\gm}(\spr^I)$. 

\end{prop}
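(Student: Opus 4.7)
The plan is to reduce the statement, via the equivariant induction equivalence used in Lemma~\ref{lem:inducedaj}, to a generation question for pullback along the affine vector bundle $m \colon \nilp \cap \mathfrak{p} \to \nilp_L$. Since $\upmu{I*}$ is a triangulated functor and $\der\coh^{G\times\gm}(\spr)$ is generated by the line bundles $\str_\spr(\lambda)\la m\ra$ for $\lambda \in \wts$ and $m \in \mathbb{Z}$, the triangulated hull of the image of $\upmu{I*}$ coincides with the triangulated hull of the generalized Andersen--Jantzen sheaves $\{A_I(\lambda)\la m\ra\}$. Under the equivariant induction equivalence $\der\coh^{G\times\gm}(\spr^I) \cong \der\coh^{P\times\gm}(\nilp \cap \mathfrak{p})$, Lemma~\ref{lem:inducedaj}(1) identifies $A_I(\lambda)$ with $m^* A^{(L)}(\lambda)$. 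Combining this with the statement recalled just before Lemma~\ref{lem:inducedaj}---that $\{A^{(L)}(\lambda)\la n\ra : \lambda \in \wts^+_I,\, n \in \mathbb{Z}\}$ generates $\der\coh^{P\times\gm}(\nilp_L)$---the proposition reduces to the claim that the triangulated hull of the image of $m^* \colon \der\coh^{P\times\gm}(\nilp_L) \to \der\coh^{P\times\gm}(\nilp \cap \mathfrak{p})$ is the whole target.

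To establish this, I would use an adjunction-based orthogonality argument. The morphism $m$ is affine---being a $P\times\gm$-equivariant trivial vector bundle with fibre $\mathfrak{u}_I$---so the derived pushforward $Rm_* = m_*$ is exact and conservative on (quasi)coherent sheaves. If $\mathcal{F} \in \der\coh^{P\times\gm}(\nilp \cap \mathfrak{p})$ were right-orthogonal to every $m^* \mathcal{G}[i]\la n\ra$ with $\mathcal{G} \in \coh^{P\times\gm}(\nilp_L)$ and $i,n \in \mathbb{Z}$, adjunction would yield $\Hom(\mathcal{G}[i]\la n\ra, m_*\mathcal{F}) = 0$ inside $\der\mathrm{QCoh}^{P\times\gm}(\nilp_L)$. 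A standard Noetherian argument---every nonzero bounded complex of quasi-coherent sheaves admits a nonzero map from some coherent complex---would then force $m_*\mathcal{F} = 0$, and conservativity of $m_*$ would give $\mathcal{F} = 0$.

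The main obstacle is upgrading this orthogonality statement to actual generation inside $\der\coh$, since a thick triangulated subcategory with trivial right-orthogonal need not coincide with the ambient category without some admissibility or compact generation hypothesis. To bypass this, one can give a more direct construction: for $\mathcal{F} \in \coh^{P\times\gm}(\nilp \cap \mathfrak{p})$, viewed as a finitely generated $\gm$-graded module $M$ over $\mathcal{O}(\nilp_L) \otimes \mathrm{Sym}(\mathfrak{u}_I^*)$, the quotient $M / \mathfrak{u}_I^* M$ is by graded Nakayama a coherent $\mathcal{O}(\nilp_L)$-module concentrated in the finitely many weights of a minimal generating set, yielding a surjection $m^*(M / \mathfrak{u}_I^* M) \twoheadrightarrow \mathcal{F}$. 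Combined with the Koszul resolution of the regular embedding $i \colon \nilp_L \hookrightarrow \nilp \cap \mathfrak{p}$, which resolves any $i_*\mathcal{G}$ in $\dim(\mathfrak{u}_I)$ steps by sheaves in the image of $m^*$, this provides the essential building blocks; the delicate point is arranging the iteration---using that the $\gm$-grading is bounded below on finitely generated modules---so that it terminates at the level of the triangulated hull.
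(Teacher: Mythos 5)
Your reduction in the first paragraph is sound and is genuinely different in emphasis from the paper: using the equivariant induction equivalence and Lemma~\ref{lem:inducedaj}(1), you correctly reduce the proposition to the statement that the image of the affine vector-bundle pullback $m^*\colon \der\coh^{P\times\gm}(\nilp_L)\to\der\coh^{P\times\gm}(\nilp\cap\mathfrak{p})$ generates the target (given that the $A^{(L)}(\lambda)$'s generate the source, which the paper records from \cite[Lemma 5.9]{A2}). You also correctly recognize that the naive orthogonality argument via $m_*$ does not yield generation inside $\der\coh$. The observation that the Koszul resolution of the zero section $i\colon\nilp_L\hookrightarrow\nilp\cap\mathfrak{p}$ places every $i_*\mathcal{G}$ in the triangulated hull of the image of $m^*$ is also correct, and is worth keeping.

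The gap is exactly at the step you flag as delicate, and I do not think it closes the way you hope. First, there is no natural surjection $m^*(M/\mathfrak{u}_I^*M)\twoheadrightarrow M$; what graded Nakayama actually produces is a surjection $m^*(\mathcal{O}_{\nilp_L}\otimes V)\twoheadrightarrow M$ for a finite-dimensional $P\times\gm$-stable generating subspace $V\subset M$, while $M/\mathfrak{u}_I^*M$ sits as a \emph{quotient} $M\twoheadrightarrow i_*(M/\mathfrak{u}_I^*M)$. Either way, the resulting iteration is infinite as soon as $M$ is not set-theoretically supported on the zero section: a minimal relative resolution $\cdots\to m^*N_1\to m^*N_0\to M$ need not terminate because $A=\mathcal{O}(\nilp_L)\otimes\mathrm{Sym}(\mathfrak{u}_I^*)$ is not regular, and the $\mathfrak{u}_I^*$-adic filtration $\mathfrak{a}^kM$ has infinitely many nonzero steps. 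The fact that the $\gm$-grading is bounded below only shows that the minimal degree of the $k$-th syzygy (or of $\mathfrak{a}^kM$) marches off to $+\infty$; it does not produce, at any finite stage, an object already known to lie in the triangulated hull, and an infinite exhaustive filtration with subquotients in a thick subcategory does not by itself imply membership in that thick subcategory. To make the induction terminate one needs an additional finiteness input beyond the grading, and in the paper that input is the finite stratification of $\nilp\cap\mathfrak{p}$ by the $P$-stable pieces $C+\mathfrak{u}_I$ for $L$-orbits $C\subset\nilp_L$ (equivalently, $X_C=G\times^P(C+\mathfrak{u}_I)\subset\spr^I$): the paper inducts on support in this stratification, with base case given by \cite[Lemma 9.3]{AR} and inductive step a positive-characteristic adaptation of \cite[Lemma 7]{BEZ2} using an associated cocharacter in place of the Jacobson--Morozov--Deligne filtration. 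Your Koszul and Nakayama ingredients in fact correspond to the base case of that induction (support in $G\times^P\mathfrak{u}_I$); what is missing is the step that reduces support from $\overline{X_C}$ to $X_{\partial C}$, and I do not see how to supply it without bringing in the orbit stratification.
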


\begin{proof} Suppose $P = P_I$ has Levi decomposition $P=LU_I$. Let $\nilp_L$ denote the nilpotent cone of the Levi factor $L$. The space $\nilp\cap\frak{p} = \nilp_L+\frak{u}_I$ has a P-stable stratification by subspaces $C+\frak{u}_I$ where $C$ is an $L$-orbit in $\nilp_L$, and so we also have that $\spr^I$ is stratified by subspaces $X_C = G\times^P (C+\frak{u}_I)$. We will prove the proposition by induction on support with respect to this stratification. 

For the base case, let $\mathcal{F}$ be an object in $\der\coh^{G\times\gm}(\spr^I)$ so that the support of $\mathcal{F}$ is contained in $X_{\{0\}} = G\times^P \frak{u}_I$. Note that $X_{\{0\}}$ identifies with the variety denoted $\spr_I$ in \cite[Section 9.1]{AR}. According to Lemma 9.3 in loc. cit., the category $\der\coh^{G\times\gm}(\spr_I)$ is generated by (the restriction of) the collection of vector bundles $(\mathcal{V}_I(\lambda)\la n\ra$, $\lambda\in\dom_I$ and $n\in\mathbb{Z}$) (see Subsection \ref{subsec:vbund} for definition). Let $i : X_{\{0\}}\rightarrow \spr^I$ denote the closed inclusion. The object $i_*i^*(\mathcal{V}_I(\lambda)\la n\ra)$ can be obtained as the pushforward of an appropriately defined vector bundle along the composition $G\times^B\mathfrak{u}_I \hookrightarrow \spr \rightarrow \spr^I$. Hence the base case holds. 

Now, assume $\mathcal{F}$ is an object in $\der\coh^{G\times\gm}(\spr^I)$ with support contained in $\overline{X_{C}} = G\times^P(\overline{C} + \frak{u}_I)$. To prove the proposition in general, we need to produce an object $\tilde{\mathcal{G}}$ in $\der\coh^{G\times\gm}(\spr)$ and a morphism $\phi: \mathcal{F}\rightarrow \upmu{I*}\tilde{\mathcal{G}}$ so that $\mathrm{cone}(\phi)$ has support contained within $X_{\partial C} = G\times^P(\overline{C}/C + \frak{u}_I)$. This follows in the same manner as \cite[Lemma 7]{BEZ2}. Note that the argument there assumes varieties are defined over characteristic 0, and uses the Jacobson--Morozov--Deligne filtration to obtain a resolution for $\overline{C}$, but this may be replaced by action of an associated cocharacter. See for instance \cite[Proposition 5.9 and 8.8 (II)]{J0}. \end{proof}

\begin{cor}\label{lem:ajgen} The objects $(A_I(\lambda)\la m\ra, \lambda\in\wts^+_I, m\in\mathbb{Z})$ generate $\der\coh^{G\times\gm}(\spr^I)$ as a triangulated category.
\end{cor}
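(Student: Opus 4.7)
The plan is to combine Proposition~\ref{prop:pushfor} with Lemma~\ref{lem:waj} in a direct induction. By Proposition~\ref{prop:pushfor}, the category $\der\coh^{G\times\gm}(\spr^I)$ is generated by the image of $\upmu{I*}$. Since $\der\coh^{G\times\gm}(\spr)$ is generated as a triangulated category by the line bundles $\str_{\spr}(\mu)\la m\ra$ for $\mu \in \wts$ and $m \in \mathbb{Z}$, and $\upmu{I*}\str_{\spr}(\mu) = A_I(\mu)$ by definition, it will suffice to show that every $A_I(\mu)$ with $\mu \in \wts$ lies in the triangulated subcategory $\mathcal{C}$ generated by $\{A_I(\nu)\la m\ra : \nu \in \wts^+_I,\ m \in \mathbb{Z}\}$ (the grading shift is absorbed since $\mathcal{C}$ is stable under $\la m\ra$).

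I would prove this by induction on $\dm_I(\mu) \in \wts^+_I$ using the order $\leq$, which is well-founded by the isomorphism $(\wts^+_I, \leq) \cong (\mathbb{Z}_{\geq 0}, \leq)$ recalled at the end of Subsection~2.3. Set $\lambda = \dm_I(\mu)$. If $\mu = \lambda$, the claim is immediate. Otherwise $\mu \in W_I\lambda \setminus \{\lambda\}$, so Lemma~\ref{lem:waj} produces a distinguished triangle
\[ A_I(\mu)\la -2\ell\ra \to A_I(\lambda) \to \mathcal{H} \to \]
with $\mathcal{H} \in \mathrm{D}^I_{\conv^0_I(\lambda)}$. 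Since $\lambda \in \wts^+_I$ we have $A_I(\lambda) \in \mathcal{C}$, and it remains only to show $\mathcal{H} \in \mathcal{C}$.

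The object $\mathcal{H}$ is built from generators $A_I(\nu')\la m\ra$ with $\nu' \in \conv^0_I(\lambda)$. For each such $\nu'$, the $W_I$-dominant representative $\dm_I(\nu')$ still satisfies $\dm_I(\nu') \wtles \lambda$, and since $W_I\nu' \neq W_I\lambda$ it lies in a different $W_I$-orbit, hence $\dm_I(\nu') \in \conv^0_I(\lambda)$. By the fact cited from \cite[Lemma 9.15]{AR} this forces $\dm_I(\nu') \leq \lambda$, and the distinct-orbit condition gives $\dm_I(\nu') \neq \lambda$, so in fact $\dm_I(\nu') < \lambda$. The inductive hypothesis then places $A_I(\nu') \in \mathcal{C}$, whence $\mathcal{H} \in \mathcal{C}$ and the induction closes.

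The argument has no genuine obstacle; the only point requiring attention is the verification that passing from an arbitrary $\nu' \in \conv^0_I(\lambda)$ to its $W_I$-dominant representative produces a strict descent in the $\leq$-order, which is what ensures the induction terminates. This is handled by the paragraph above and follows directly from the definition of $\conv^0_I$ together with the compatibility of $\leq$ with $\wtles$ recorded in \cite[Lemma 9.15]{AR}.
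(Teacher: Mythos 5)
Your proof is correct and follows the paper's approach: the one-line proof in the paper cites precisely your three ingredients --- \cite[Corollary 5.8]{A2} (line bundles generate $\der\coh^{G\times\gm}(\spr)$), Proposition \ref{prop:pushfor}, and Lemma \ref{lem:waj} --- and your induction on $\dm_I(\mu)$ using $(\wts^+_I,\leq)\cong(\mathbb{Z}_{\geq 0},\leq)$ is exactly what is implicit there. The one point you leave tacit, namely that $\mu\leq\dm_I(\mu)$ so that the hypothesis of Lemma \ref{lem:waj} holds for the pair $(\mu,\dm_I(\mu))$, is a compatibility property of the chosen order $\leq$ that the paper likewise uses without comment (compare the proof of Lemma \ref{lem:dtri}, where the lemma is applied to $(w_I\lambda,\lambda)$).
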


\begin{proof} This follows by combining \cite[Corollary 5.8]{A2}, Proposition \ref{prop:pushfor}, and Lemma \ref{lem:waj}.\end{proof}

\subsection{Proper (co)standards} 
 
 For any $\lambda\in\wts$, let $\delta^I_{\lambda}$ denote the minimal length of an element $w\in W_I$ such that $w(\lambda)\in\wts^+_I$.
 \begin{lemma}\label{lem:stdtostd} The functor $\upmu{I*}: \der\coh^{G\times\gm}(\spr)\rightarrow \der\coh^{G\times\gm}(\spr^I)$ satisfies
\begin{enumerate}
\item $\upmu{I*} \exstd(\lambda) \la \delta^I_\lambda \ra \cong \upmu{I*} \exstd(\dm_I(\lambda))$ 
\item $\upmu{I*} \excos(\lambda) \la -\delta^I_\lambda \ra \cong \upmu{I*} \excos(\dm_I(\lambda))$ 
\end{enumerate}
\end{lemma}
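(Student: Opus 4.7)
The plan is to prove (1) and (2) simultaneously by induction on $\delta^I_\lambda$, since the two are formally parallel; I focus on (1). The base case $\delta^I_\lambda = 0$ is immediate, as then $\lambda = \dm_I(\lambda)$. For the inductive step, I would choose a simple reflection $s \in W_I$ corresponding to $\alpha \in I$ with $\la s\lambda, \alpha^\vee\ra > 0$, so that $\delta^I_{s\lambda} = \delta^I_\lambda - 1$ and, by a standard Bruhat length computation, $\delta_\lambda = \delta_{s\lambda} + 1$. Writing $\mu = s\lambda$, it suffices to prove
\[\upmu{I*}\exstd(\lambda)\la 1\ra \cong \upmu{I*}\exstd(\mu);\]
iterating this along a reduced expression for the minimal $w \in W_I$ with $w\lambda = \dm_I(\lambda)$ then gives the general statement.

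The main tool is to apply $\upmu{I*}$ to the defining triangles of $\exstd(\lambda)$ and $\exstd(\mu)$ provided by Proposition \ref{prop:excosstd}, and compare them using Lemma \ref{lem:waj} (applied with its $(\lambda,\mu)$ equal to our $(\mu,\lambda)$ and $\ell = 1$) as a bridge between the middle terms $A_I(\lambda)$ and $A_I(\mu)$:
\[A_I(\lambda)\la -2\ra \to A_I(\mu) \to \mathcal{H} \to,\]
with $\mathcal{H} \in \mathrm{D}^I_{\conv^0_I(\mu)}$. The octahedral axiom, applied twice---once to splice a suitably shifted Lemma \ref{lem:waj} triangle with the pushforward of the $\exstd(\mu)$ triangle, and again to compare with the shifted pushforward of the $\exstd(\lambda)$ triangle---produces a canonical morphism $\upmu{I*}\exstd(\lambda)\la 1\ra \to \upmu{I*}\exstd(\mu)$ whose cone is assembled from $\upmu{I*}\mathcal{F}'\la 1\ra$, $\upmu{I*}\mathcal{F}''$ (where $\mathcal{F}' \in \mathrm{D}_{<\lambda}$ and $\mathcal{F}'' \in \mathrm{D}_{<\mu}$ come from Proposition \ref{prop:excosstd}(1)), and $\mathcal{H}$. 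All of these lie in $\mathrm{D}^I_{<\dm_I(\mu)}$ by \cite[Lemma 9.15]{AR} and the inductive hypothesis.

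To conclude that the cone vanishes, I would exploit the hom-vanishing characterization of exotic standards from Proposition \ref{prop:excosstd}(2): one shows that $\Hom$ from $\upmu{I*}\exstd(\mu)$ into any object of $\mathrm{D}^I_{<\dm_I(\mu)}$ vanishes, reducing via Corollary \ref{lem:ajgen} to checking against the generating generalized Andersen--Jantzen sheaves and then transferring to the Springer-resolution side via adjunction together with the projection formula for line bundles $\str_{\spr^I}(\gamma)$. The main obstacle lies in the bookkeeping of grading shifts at the octahedral step: the naive shift $\la -\delta_\mu\ra$ appearing in the $\exstd(\lambda)\la 1\ra$-side does not match the shift $\la -\delta_\mu+2\ra$ one gets from directly combining Lemma \ref{lem:waj} with the $\exstd(\mu)$-side, and this $\la 2\ra$-discrepancy must be absorbed into the pushforward of the error terms in $\mathcal{F}'$ contributed by weights in $W_I\lambda$ lying strictly below $\lambda$ in the order $\leq$. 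Verifying that this absorption happens cleanly, so that the composed octahedron really does yield a well-defined isomorphism (and not merely a morphism with ambiguous cone), is the most delicate part of the argument.
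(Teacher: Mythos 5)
Your approach diverges fundamentally from the paper's, and it contains a gap that you yourself flag but do not close.

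The paper's proof does not touch the defining triangles of $\exstd(\lambda)$ or $\exstd(\mu)$ nor Lemma~\ref{lem:waj}. Instead, it invokes the wall-crossing functor $\Psi_s$ and the distinguished triangle from \cite[Proposition 9.19(4)]{AR},
\[
\excos(\lambda)\rightarrow \excos(s\lambda)\la-1\ra\rightarrow \Psi_s(\excos(\lambda))\la-1\ra\rightarrow,
\]
and reduces everything to the single geometric claim $\upmu{I*}\circ\Psi_s=0$. That vanishing is proved by factoring $\upmu{I}$ through $G\times^{P_\alpha}(\mathfrak{p}_I\cap\nilp)$ and observing that, along the $\mathbb{P}^1$-fibers of $G\times^B\mathfrak{u}_\alpha\to G\times^{P_\alpha}\mathfrak{u}_\alpha$, objects in the image of $\Psi_s$ restrict to sums of copies of $\str_{\mathbb{P}^1}(-1)[n]$, which have vanishing pushforward. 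This is a clean, one-step argument; no octahedron, no induction on the cone.

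The gap in your proposal is exactly the $\la2\ra$-discrepancy you point out at the end, and it is not a matter of delicate bookkeeping: it is fatal to the construction of the candidate morphism. Concretely, with $\mu=s\lambda$, $\la\mu,\alpha^\vee\ra>0$, one has $\delta_\lambda=\delta_\mu+1$. Twisting the pushed-forward defining triangle for $\exstd(\lambda)$ by $\la1\ra$ places the middle term at $A_I(\lambda)\la-\delta_\lambda+1\ra=A_I(\lambda)\la-\delta_\mu\ra$, whereas the only comparison morphism supplied by Lemma~\ref{lem:waj} (after twisting by $\la-\delta_\mu\ra$) is $A_I(\lambda)\la-\delta_\lambda-1\ra\to A_I(\mu)\la-\delta_\mu\ra$. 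There is no morphism $A_I(\lambda)\la-\delta_\mu\ra\to A_I(\mu)\la-\delta_\mu\ra$ available, and nothing in Lemma~\ref{lem:waj}, Proposition~\ref{prop:excosstd}, or Corollary~\ref{lem:ajgen} produces one. Saying the mismatch ``must be absorbed into the error terms'' does not give a map: the octahedra you describe never produce an arrow $\upmu{I*}\exstd(\lambda)\la1\ra\to\upmu{I*}\exstd(\mu)$ at all, so there is no cone whose vanishing one could then attack. (Separately, the Hom-vanishing step you sketch would require controlling $\upmu{I}^!A_I(\gamma)=\upmu{I}^!\upmu{I*}\str_{\spr}(\gamma)$, which is not addressed and is not straightforward since $\upmu{I}$ is not smooth.) To repair this, you would need either an additional comparison between $A_I(\lambda)$ and $A_I(\mu)$ at the correct shift, or to abandon the defining-triangle strategy entirely in favor of something like the paper's reflection-functor argument.
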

\begin{proof} Our proof is similar to \cite[Lemma 8]{BEZ}. Parts (1) and (2) are similar; we prove (2). Suppose that $s\in W_I$ is a simple reflection corresponding to simple root $\alpha\in I$, and $s\lambda\wtless\lambda$. It is sufficient to show $\upmu{I*} \excos(\lambda) \cong \upmu{I*} \excos(s\lambda)\la -1\ra$. \cite[Proposition 9.19 (4)]{AR} gives a distinguished triangle in $\der\coh^{G \times\gm}(\spr)$ : \[\excos(\lambda)\rightarrow \excos(s\lambda)\la-1\ra\rightarrow \Psi_s(\excos(\lambda))\la-1\ra\rightarrow.\] (Note that the functor $\Psi_s: \der\coh^{G \times\gm}(\spr)\rightarrow \der\coh^{G \times\gm}(\spr)$ is written as a composition of two functors in \cite{AR}: $\Pi_s: \der\coh^{G \times\gm}(\spr)\rightarrow \der\coh^{G \times\gm}(T^*G/P_\alpha)$ and $\Pi^s: \der\coh^{G \times\gm}(T^*G/P_\alpha)\rightarrow \der\coh^{G \times\gm}(\spr)$.) The result follows once we show $\upmu{I*}\circ\Psi_s = 0$. Recall that $\mathfrak{u}_\alpha$ denotes the Lie algebra of the unipotent radical of $P_\alpha$. The map $\upmu{I}$ factors as $\spr\rightarrow G\times^{P_\alpha}(\mathfrak{p}_I \cap \mathcal{N})\rightarrow\spr^I$. Moreover, the compositions of the natural maps $G\times^B\mathfrak{u}_\alpha\rightarrow\spr\rightarrow G\times^{P_\alpha}(\mathfrak{p}_I \cap \mathcal{N})\rightarrow\spr^I$ and $G\times^B\mathfrak{u}_\alpha\rightarrow G\times^{P_\alpha}\mathfrak{u}_\alpha\rightarrow G\times^{P_\alpha}(\mathfrak{p}_I \cap \mathcal{N})\rightarrow\spr^I$ are equal. On the other hand, the map $G\times^B\mathfrak{u}_\alpha\rightarrow G\times^{P_\alpha}\mathfrak{u}_\alpha$ has fibers isomorphic to $\mathbb{P}^1$. Now, for any $\mathcal{F}$ in $\der\coh^{G \times\gm}(\spr)$, the object $\Psi_s(\mathcal{F})$ is an extension of an object from $G\times^B\mathfrak{u}_\alpha$, and along the fibers of $G\times^B\mathfrak{u}_\alpha\rightarrow G\times^{P_\alpha}\mathfrak{u}_\alpha$ is isomorphic to a sum of copies of $\str_{\mathbb{P}^1}(-1)[n]$ for various $n\in\mathbb{Z}$. Hence, the pushforward along $\upmu{I}$ vanishes. 
\end{proof}

\begin{defn}For each $\lambda\in\wts^+_I$, we define objects in $\der\coh^{G\times\gm}(\spr^I)$ by \[\expcos_I(\lambda) = \upmu{I*} \excos(\lambda) \textup{ and }\expstd_I(\lambda) = \upmu{I*} \exstd(w_I\lambda)\la-\delta^I_{w_I\lambda}\ra\cong\upmu{I*} \exstd(\lambda)\la-2\delta^I_{w_I\lambda}\ra.\] These objects (and their $\gm$-twists) are called \textit{proper costandard} and \textit{proper standard} respectively. (This terminology will be justified in Section \ref{Sec: heart}. We are reserving the notation $\excos_I(\lambda)$ and $\exstd_I(\lambda)$ for another class of objects.) 
\end{defn}
\begin{lemma}\label{lem:dtri} The proper costandard and proper standard objects fit into distinguished triangles 
\[\mathcal{G}\rightarrow A_I(\lambda)\la-\delta_{\lambda}\ra\rightarrow \expcos_I(\lambda)\rightarrow \hspace{.1in}\textup{ and }\hspace{.1in}\expstd_I(\lambda) \rightarrow A_I(\lambda)\la-\delta_{\lambda}\ra\rightarrow\mathcal{G}'\rightarrow\]
satisfying $\mathcal{G}, \mathcal{G'} \in \mathrm{D}^I_{< \lambda}$.
\end{lemma}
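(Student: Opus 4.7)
The plan is to push forward the defining triangles from Proposition~\ref{prop:excosstd} along $\upmu{I*}$ and verify the resulting error terms lie in $\mathrm{D}^I_{<\lambda}$. The key criterion, taken from the remark preceding Lemma~\ref{lem:waj}, is that $A_I(\mu) \in \mathrm{D}^I_{<\lambda}$ whenever $\dm_I(\mu) < \lambda$.

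For the costandard, applying $\upmu{I*}$ to the triangle $\mathcal{F} \to \str_{\spr}(\lambda)\langle -\delta_\lambda\rangle \to \excos(\lambda)$ immediately yields a triangle $\upmu{I*}\mathcal{F} \to A_I(\lambda)\langle -\delta_\lambda\rangle \to \expcos_I(\lambda)$. Since $\mathcal{F}$ is generated by $\str_{\spr}(\mu)\langle m\rangle$ with $\mu \in \conv^0(\lambda)$, it suffices to check $\dm_I(\mu) < \lambda$ for such $\mu$. But $\dm_I(\mu)$ is in the same $W$-orbit as $\mu$, so $\dm(\dm_I(\mu)) = \dm(\mu) \wtless \dm(\lambda)$, placing $\dm_I(\mu)$ in $\conv^0(\lambda)$. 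The cited property of $\leq$ then gives $\dm_I(\mu) \leq \lambda$, and equality is ruled out because it would force $\mu \in W_I\lambda \subseteq W\lambda$, contradicting $\mu \in \conv^0(\lambda)$.

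The standard case is more delicate because $\expstd_I(\lambda) = \upmu{I*}\exstd(w_I\lambda)\langle -\delta^I_{w_I\lambda}\rangle$ involves the anti-dominant weight $w_I\lambda$. Pushing forward the defining triangle for $\exstd(w_I\lambda)$ and twisting by $\langle -\delta^I_{w_I\lambda}\rangle$ produces a triangle with middle term $A_I(w_I\lambda)\langle -\delta_{w_I\lambda} -\delta^I_{w_I\lambda}\rangle$ rather than $A_I(\lambda)\langle -\delta_\lambda\rangle$. I would bridge the two by (i) establishing the length identity $\delta_{w_I\lambda} = \delta_\lambda + \delta^I_{w_I\lambda}$ (a standard consequence of the factorization of elements of $W$ as minimal-length coset representatives for $W/W_I$ times elements of $W_I$); (ii) invoking Lemma~\ref{lem:waj} to relate $A_I(w_I\lambda)\langle -2\delta^I_{w_I\lambda}\rangle$ to $A_I(\lambda)$ modulo $\mathrm{D}^I_{\conv^0_I(\lambda)}$, noting that the minimal $w\in W_I$ with $w(w_I\lambda) = \lambda$ has length $\delta^I_{w_I\lambda}$; and (iii) combining these via the octahedron axiom to obtain the desired triangle $\expstd_I(\lambda) \to A_I(\lambda)\langle -\delta_\lambda\rangle \to \mathcal{G}' \to$, where $\mathcal{G}'$ is an extension of a twist of the cone from Lemma~\ref{lem:waj} by the pushforward of the $\mathrm{D}_{<w_I\lambda}$-term.

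The main obstacle will be verifying $\mathcal{G}' \in \mathrm{D}^I_{<\lambda}$ for the standard case. The Lemma~\ref{lem:waj} piece sits in $\mathrm{D}^I_{\conv^0_I(\lambda)} \subseteq \mathrm{D}^I_{<\lambda}$ by definition, but for the other piece one needs $\dm_I(\mu) < \lambda$ for all $\mu < w_I\lambda$ in the order $\leq$. This is not transparent from the setup and will require unpacking the construction of $\leq$ in \cite[Subsection 9.4]{AR}: the total order on $\wts^+_I$ is compatible with $\dm_I$, and $w_I\lambda$ sits as the minimum element of the $W_I$-orbit $W_I\lambda$ in $\leq$. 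Together these imply that $\mu < w_I\lambda$ forbids $\mu \in W_I\lambda$, so $\dm_I(\mu) \neq \lambda$; combined with $\dm_I(\mu) \leq \lambda$ this yields the required strict inequality.
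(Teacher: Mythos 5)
Your proof is correct and follows essentially the same route as the paper's: for the proper costandard, push forward the defining triangle \eqref{eq:sprexcos} and use the remark preceding Lemma~\ref{lem:waj} to place the cocone in $\mathrm{D}^I_{<\lambda}$; for the proper standard, use the length identity $\delta_{w_I\lambda}=\delta_\lambda+\delta^I_{w_I\lambda}$, Lemma~\ref{lem:waj}, the pushed-forward triangle \eqref{eq:sprexstd} for weight $w_I\lambda$, and the octahedral axiom, with the compatibility of $\leq$ and $\dm_I$ (that is, [AR, (9.9)]) controlling the lower term. The only difference is that you spell out the verification that $\dm_I(\mu)<\lambda$ for the relevant $\mu$, which the paper compresses into a citation of [AR, Proposition 9.19(2)] and (9.9).
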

\begin{proof}
The first distinguished triangle is obtained by pushing forward distinguished triangle \eqref{eq:sprexcos} for weight $\lambda$. To see the restriction on the cocone of the first triangle, first note that \cite[Proposition 9.19 (2)]{AR} says that in triangle \eqref{eq:sprexcos}, $\mathcal{F} \in \mathrm{D}_{\conv^0(\lambda)}$. This along with Lemma \ref{lem:waj} imply $\mathcal{G} \in \mathrm{D}^I_{< \lambda}$.


For the second triangle, we will apply the octahedral axiom. Note that since $\lambda\in\wts^+_I$, we have $\delta^I_{w_I\lambda}+\delta_{\lambda} = \delta_{w_I\lambda}$, so Lemma \ref{lem:waj} gives distinguished triangle 
\begin{equation}\label{eq:ajtri}A_I(w_I\lambda)\la-\delta_{w_I\lambda}-\delta^I_{w_I\lambda}\ra\rightarrow A_I(\lambda)\la-\delta_\lambda \ra \rightarrow \mathcal{G}_1\rightarrow\end{equation} with $\mathcal{G}_1\in\mathrm{D}^I_{\conv^0_I(\lambda)}$. We also use the triangle \begin{equation}\label{eq:alttri}\expstd_I(\lambda) \rightarrow A_I(w_I\lambda)\la-\delta_{w_I\lambda}-\delta^I_{w_I\lambda}\ra\rightarrow\mathcal{G}_2\rightarrow\end{equation} which is obtained by pushing forward distinguished triangle \eqref{eq:sprexstd} for weight $w_I\lambda$, then applying the twist $\la -\delta^I_{w_I\lambda}\ra$. We automatically get that $\mathcal{G}_2$ is in the triangulated category generated by $A_I(\gamma)\la m \ra$ with $\gamma < w_I\lambda$ and $m\in\mathbb{Z}$. Of course, $\gamma < w_I\lambda$ implies $W_I\gamma \neq W_I\lambda$, so we may apply \cite[Subsection 9.4, (9.9)]{AR} to see that we have  $\dm_I(\gamma) < w_I\lambda$ as well. This together with Lemma \ref{lem:waj} imply $\mathcal{G}_2\in\mathrm{D}^I_{< \lambda}$. The first morphism of the distinguished triangle we desire is the composition of the two first morphisms in triangles \eqref{eq:ajtri} and \eqref{eq:alttri}.

Finally the octahedral axiom gives a distinguished triangle \[\mathcal{G}_2\rightarrow\mathcal{G}'\rightarrow\mathcal{G}_1\rightarrow\] which implies $\mathcal{G}'\in\mathrm{D}^I_{< \lambda}$ since the same is true for $\mathcal{G}_1$ and $\mathcal{G}_2$.\end{proof}

\begin{cor}\label{lem:cosgen} The objects $(\expcos_I(\lambda)\la m\ra, \lambda\in\wts^+_I, m\in\mathbb{Z})$ generate $\der\coh^{G\times\gm}(\spr^I)$ as a triangulated category. Similarly, the objects $(\expstd_I(\lambda)\la m\ra, \lambda\in\wts^+_I, m\in\mathbb{Z})$ generate $\der\coh^{G\times\gm}(\spr^I)$ as a triangulated category.
\end{cor}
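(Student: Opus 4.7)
The plan is to prove by induction along the total order $\leq$ restricted to $\wts^+_I$ that for every $\lambda\in\wts^+_I$ and every $m\in\mathbb{Z}$ the object $A_I(\lambda)\la m\ra$ lies in the triangulated subcategory of $\der\coh^{G\times\gm}(\spr^I)$ generated by $\{\expcos_I(\mu)\la n\ra : \mu\in\wts^+_I,\ \mu\leq\lambda,\ n\in\mathbb{Z}\}$. The order $\leq$ restricts to a well-ordering on $\wts^+_I$ via the isomorphism with $(\mathbb{Z}_{\geq 0},\leq)$ recalled in Section \ref{sec:background}, so induction along it is legitimate. Once the claim is established, Corollary \ref{lem:ajgen} immediately yields the first assertion. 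The statement for proper standards follows by running the same argument with the second triangle of Lemma \ref{lem:dtri} in place of the first.

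For the base case I would take $\lambda_0$ to be the minimum of $(\wts^+_I,\leq)$. Then $\mathrm{D}^I_{<\lambda_0}$ is the zero subcategory, so the first triangle of Lemma \ref{lem:dtri} collapses to an isomorphism $\expcos_I(\lambda_0)\cong A_I(\lambda_0)\la-\delta_{\lambda_0}\ra$, which places every twist of $A_I(\lambda_0)$ into the subcategory generated by the twists of $\expcos_I(\lambda_0)$.

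For the inductive step I would fix $\lambda\in\wts^+_I$ and assume the claim for all strictly smaller weights in $\wts^+_I$. The inductive hypothesis places every generator $A_I(\mu)\la n\ra$ of $\mathrm{D}^I_{<\lambda}$, and hence all of $\mathrm{D}^I_{<\lambda}$, inside the triangulated subcategory generated by $\{\expcos_I(\mu')\la n'\ra : \mu'<\lambda,\ n'\in\mathbb{Z}\}$. The first triangle of Lemma \ref{lem:dtri},
\[ \mathcal{G}\to A_I(\lambda)\la-\delta_\lambda\ra\to \expcos_I(\lambda)\to, \]
with $\mathcal{G}\in\mathrm{D}^I_{<\lambda}$, then realizes $A_I(\lambda)\la-\delta_\lambda\ra$ as an extension of two objects lying in the subcategory generated by $\{\expcos_I(\mu)\la n\ra : \mu\leq\lambda,\ n\in\mathbb{Z}\}$, and twisting by $\la m+\delta_\lambda\ra$ closes the induction for every $A_I(\lambda)\la m\ra$.

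Because Lemma \ref{lem:dtri} already packages the discrepancy between the proper (co)standards and the generating Andersen--Jantzen sheaves entirely into the subcategory $\mathrm{D}^I_{<\lambda}$, I do not anticipate any genuine obstacle. The only point that needs to be verified is well-foundedness of $\leq$ on $\wts^+_I$ in the downward direction, which is handed to us by the order-isomorphism with $(\mathbb{Z}_{\geq 0},\leq)$; given that, the argument is essentially bookkeeping on top of Lemma \ref{lem:dtri} and Corollary \ref{lem:ajgen}.
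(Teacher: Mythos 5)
Your argument is correct and is exactly the paper's: the paper's proof cites Corollary \ref{lem:ajgen} and Lemma \ref{lem:dtri} and leaves the induction along $(\wts^+_I,\leq)\cong(\mathbb{Z}_{\geq 0},\leq)$ implicit, which you have simply spelled out. No further comment is needed.
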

\begin{proof}This follows from Lemmas \ref{lem:ajgen} and \ref{lem:dtri}.
\end{proof}

\subsection{Helpful vector bundles} \label{subsec:vbund}
For each $\mu\in\wts^+_I$, let $\mathcal{V}(\mu)$ denote the vector bundle on $\spr$ defined by $\pi^*\mathscr{S}_{G/B}(N_I(\mu))$ where $N_I(\mu)$ is the dual Weyl module for $L_I$: $N_I(\mu) = \mathrm{Ind}^{L_I}_{B\cap L_I} \bbk_\mu \cong \mathrm{Ind}^{P_I}_B \bbk_\mu$. Similarly, we let $\mathcal{V}_I(\mu)$ denote the vector bundle on $\spr^I$ defined by $\pi_I^*\mathscr{S}_{G/P_I}(N_I(\mu))\cong\upmu{I*}\mathcal{V}(\mu)$. 

\begin{remark}\label{rem:pullbacks} There are isomorphisms $\upmu{I}^*\mathcal{V}_I(\mu)\cong \mathcal{V}(\mu)$ and $\upmu{I}^!\mathcal{V}_I(\mu)\cong \mathcal{V}(\mu)$. Recall the commutative diagram \eqref{cd:maps}. Hence we have isomorphisms $\upmu{I}^*\mathcal{V}_I(\mu)\cong \upmu{I}^*\pi^*_I\vb_{G/P_I}(N_I(\mu))\cong  \pi^*p^*\vb_{G/P_I}(N_I(\mu))\cong \pi^*\vb_{G/B}(N_I(\mu))= \mathcal{V}(\mu).$

For the second identification, we use duality. We have $\upmu{I}^! \cong \vd\circ\upmu{I}^*\circ\vd$ where $\vd$ denotes Serre--Grothendieck duality. Moreover, $\str_{\spr^I}$ is a dualizing complex on $\spr^I$ by Lemma \ref{lem:inducedaj}, so $\vd = R\mathcal{H}om(-, \str_{\spr^I})$ takes a vector bundle to the dual vector bundle. 
\end{remark}

For two objects $\mathcal{F}, \mathcal{G}$, we introduce the notation $\Hom^i(\mathcal{F}, \mathcal{G}) := \Hom(\mathcal{F}, \mathcal{G}[i])$. 

\begin{lemma}\label{lem:vectorbundlehelp} Suppose $\mu, \lambda\in\wts^+_I$.
\begin{enumerate}
\item If  $\mu<\lambda$, then for all $m, i\in\mathbb{Z}$, \[\Hom^i(\mathcal{V}_I(\mu), \expcos_I(\lambda)\la m\ra) = \Hom^i(\expstd_I(\lambda), \mathcal{V}_I(\mu)\la m\ra)= 0.\] 
\item $\Hom^i(\mathcal{V}_I(\mu), \expcos_I(\mu)\la m\ra)\cong\begin{cases}
\bbk &\text{if } i=0, m = \delta_\mu\\
0 &\text{else} 
\end{cases}$  

and $\Hom^i(\expstd_I(\mu)\la m\ra, \mathcal{V}_I(\mu))\cong\begin{cases}
\bbk &\text{if } i=0, m = \delta_{w_I\mu} + \delta^I_{w_I\mu}\\
0 &\text{else} 
\end{cases}$
\item There are distinguished triangles
 \[\mathcal{V}_I(\mu)\la -\delta_\mu\ra\rightarrow \expcos_I(\mu)\rightarrow\mathcal{B}\rightarrow \hspace{.1in}\textup{ and } \hspace{.1in} \expstd_I(\mu)\rightarrow\mathcal{V}_I(\mu)\la \delta\ra\rightarrow \mathcal{B}'\rightarrow \] with $\mathcal{B}, \mathcal{B}'$ in $\mathrm{D}^I_{\leq \mu}$ and $\delta =-\delta_{w_I\mu}-\delta^I_{w_I\mu}$. 
 \end{enumerate}
\end{lemma}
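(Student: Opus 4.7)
\emph{Strategy.} The plan is to reduce all Hom computations on $\spr^I$ to Hom computations on $\spr$ using the adjunctions $(\upmu{I}^*, \upmu{I*})$ and $(\upmu{I*}, \upmu{I}^!)$ together with the identifications $\upmu{I}^*\mathcal{V}_I(\mu) \cong \upmu{I}^!\mathcal{V}_I(\mu) \cong \mathcal{V}(\mu)$ from Remark \ref{rem:pullbacks}, and then to invoke Proposition \ref{prop:excosstd} and the graded exceptional structure on $\der\coh^{G\times\gm}(\spr)$. The main combinatorial ingredient is the weight filtration of $\mathcal{V}(\mu) = \pi^*\vb_{G/B}(N_I(\mu))$: since the $T$-weights of the dual Weyl $L_I$-module $N_I(\mu)$ all lie in $\conv_I(\mu)$, there are distinguished triangles
\[
\mathcal{C} \to \mathcal{V}(\mu) \to \str_{\spr}(\mu) \to \qquad \text{and} \qquad \str_{\spr}(w_I\mu) \to \mathcal{V}(\mu) \to \mathcal{C}'' \to
\]
with $\mathcal{C}, \mathcal{C}'' \in \mathrm{D}_{\leq \mu}$; moreover $\mathcal{C} \in \mathrm{D}_{<\mu}$ since the non-highest weights of $N_I(\mu)$ lie in $\conv_I^0(\mu) \cup (W_I\mu \setminus \{\mu\}) \subseteq \{\nu : \nu < \mu\}$ by \cite[Lemma 9.15]{AR} together with the $W_I$-compatibility of $\leq$ from \cite[Subsection 9.4]{AR}.

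\emph{Parts (1) and (2).} For \emph{part (1)}, adjunction and the alternative form $\expstd_I(\lambda) = \upmu{I*}\exstd(\lambda)\la -2\delta^I_{w_I\lambda}\ra$ turn the two Hom-groups into $\Hom^i(\mathcal{V}(\mu), \excos(\lambda)\la m\ra)$ and $\Hom^i(\exstd(\lambda), \mathcal{V}(\mu)\la m+2\delta^I_{w_I\lambda}\ra)$; since $\mathcal{V}(\mu) \in \mathrm{D}_{\leq\mu} \subseteq \mathrm{D}_{<\lambda}$ when $\mu < \lambda$, both vanish by Proposition \ref{prop:excosstd}(2). For \emph{part (2)}, apply $\Hom^\bullet(-, \excos(\mu)\la m\ra)$ to the filtration triangle (killing the $\mathcal{C}$-contribution via Proposition \ref{prop:excosstd}(2)) and then to the defining triangle $\exstd(\mu) \to \str_{\spr}(\mu)\la -\delta_\mu\ra \to \mathcal{F}' \to$ (killing the $\mathcal{F}'$-contribution likewise), collapsing the computation to $\Hom^i(\exstd(\mu), \excos(\mu)\la m - \delta_\mu\ra)$, which is $\bbk$ exactly when $(i, m) = (0, \delta_\mu)$ by the graded exceptional property. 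The second formula is parallel: after the adjunction and filtration reductions one arrives at $\Hom^i(\exstd(\mu), \exstd(\mu)\la \delta_\mu + 2\delta^I_{w_I\mu} - m \ra)$, selecting $m = \delta_\mu + 2\delta^I_{w_I\mu}$; the combinatorial identity $\delta_{w_I\mu} = \delta_\mu + \delta^I_{w_I\mu}$ for $\mu \in \wts^+_I$ (already used in the proof of Lemma \ref{lem:dtri}) rewrites this as $m = \delta_{w_I\mu} + \delta^I_{w_I\mu}$.

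\emph{Part (3) and the main obstacle.} For the first triangle, construct $\mathcal{B}$ as the cone of the composition
\[
\mathcal{V}_I(\mu)\la -\delta_\mu\ra \longrightarrow A_I(\mu)\la -\delta_\mu\ra \longrightarrow \expcos_I(\mu),
\]
where the first arrow is $\upmu{I*}$ applied to the quotient $\mathcal{V}(\mu) \twoheadrightarrow \str_{\spr}(\mu)$ coming from the filtration, and the second arises from $\upmu{I*}$ applied to the defining triangle for $\excos(\mu)$. The octahedral axiom exhibits $\mathcal{B}$ as an extension of the individual cones $\upmu{I*}\mathcal{F}[1]$ and $\upmu{I*}\mathcal{C}\la -\delta_\mu\ra[1]$; both lie in $\mathrm{D}^I_{\leq\mu}$ by Lemma \ref{lem:waj}, hence $\mathcal{B} \in \mathrm{D}^I_{\leq\mu}$. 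The second triangle is constructed analogously, starting from the inclusion $\str_{\spr}(w_I\mu) \hookrightarrow \mathcal{V}(\mu)$ at the bottom of the filtration and $\upmu{I*}$ applied to the defining triangle for $\exstd(w_I\mu)\la -\delta^I_{w_I\mu}\ra$. The main delicate point throughout is the combinatorial identity $\delta_{w_I\mu} = \delta_\mu + \delta^I_{w_I\mu}$ for $\mu \in \wts^+_I$, which is what makes the grading shifts in parts (2) and (3) line up with the stated values $m = \delta_{w_I\mu}+\delta^I_{w_I\mu}$ and $\delta = -\delta_{w_I\mu}-\delta^I_{w_I\mu}$; a secondary care is checking that every weight of $N_I(\mu)$ other than $\mu$ really is strictly less than $\mu$ in the chosen order $\leq$, which reduces to the compatibility statements in \cite[Subsection 9.4]{AR}.
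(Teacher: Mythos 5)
Your proposal is essentially correct and follows the same overall strategy as the paper: push the Hom-computation from $\spr^I$ to $\spr$ via the adjunctions $(\upmu{I}^*,\upmu{I*})$ and $(\upmu{I*},\upmu{I}^!)$, use the weight filtration of $\mathcal{V}(\mu)$ by line bundles, and then invoke the exotic (quasi-)exceptional structure on $\spr$. Parts (1) and (3) match the paper's proof closely, including the octahedral-axiom argument and the bookkeeping $\delta_{w_I\mu}=\delta_\mu+\delta^I_{w_I\mu}$.

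The one place you take a slightly different route is part (2). You peel off $\mathcal V(\mu)\to\str_{\spr}(\mu)$ and then apply the defining triangle for $\exstd(\mu)$ to land directly on $\Hom^i(\exstd(\mu),\excos(\mu)\la m-\delta_\mu\ra)$, invoking the dual-exceptional-collection fact that this is $\bbk$ iff $i=0$, $m=\delta_\mu$. The paper instead passes to the Verdier quotient $\der\coh^{G\times\gm}(\spr)/\mathrm D_{<\mu}$, notes $\Pi_{<\mu}\mathcal V(\mu)\cong\Pi_{<\mu}\str_{\spr}(\mu)$, and uses the adjunction with $\Pi_{<\mu}^R$ together with \cite[Lemma 2 (d)]{BEZ2} (exceptionality of $\str_{\spr}(\mu)$ in the quotient) to get the same answer. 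These are interchangeable: your version avoids explicitly invoking the quotient functor, while the paper's version isolates the use of the precise input from \cite{BEZ2}. The grading shifts come out identically. You also correctly flag the delicate combinatorial input -- that every weight of $N_I(\mu)$ other than $\mu$ is $<\mu$ for the chosen order $\leq$, via \cite[Lemma 9.15]{AR} and the $W_I$-compatibility of the order from \cite[Subsection 9.4]{AR} -- which is implicitly used in the paper's reduction as well.
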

\begin{proof}
(1) First, adjunction gives an isomorphism with $\Hom^i (\upmu{I}^*\mathcal{V}_I(\mu), \excos(\lambda)\la m\ra).$ The vector bundle $\upmu{I}^*\mathcal{V}_I(\mu)\cong\mathcal{V}(\mu)$ (see Remark \ref{rem:pullbacks}) is filtered by line bundles $\str_{\spr}(\nu)$ with $\nu<\lambda$ (since $\nu\in \conv_I(\mu)\subset \conv(\mu)$ and $\mu<\lambda$), hence $\mathcal{V}(\mu)\in \mathrm{D}_{<\lambda}$. Proposition \ref{prop:excosstd} part (2) yields $\Hom^i_{\der\coh^{G\times\gm}(\spr)} (\mathcal{V}(\mu), \excos(\lambda)\la m\ra) =0$, which implies $\Hom^i(\mathcal{V}_I(\mu), \expcos_I(\lambda)\la m\ra) = 0$ for all $m, i\in\mathbb{Z}$.

The second Hom-vanishing is similar. First use adjunction to get an isomorphism with $\Hom^i_{\der\coh^{G\times\gm}(\spr)} (\exstd(\lambda), \upmu{I}^!\mathcal{V}_I(\mu)\la 2\delta^I_{w_I\lambda}+m\ra).$ Remark \ref{rem:pullbacks} gives an isomorphism $\upmu{I}^!\mathcal{V}_I(\mu) \cong \mathcal{V}(\mu)$, and $ \mathcal{V}(\mu)\in \mathrm{D}_{<\lambda}$. Proposition \ref{prop:excosstd} part (2) yields $\Hom^i_{\der\coh^{G\times\gm}(\spr)} (\exstd(\lambda), \mathcal{V}(\mu)\la m\ra) =0$, which implies the claim.

(2) There is a short exact sequence of $B$ modules $0\rightarrow A\rightarrow \mathrm{N}_I(\mu)\rightarrow \bbk_\mu\rightarrow 0$ with all weights of $A$ in $\conv_I(\mu)\backslash\{\mu\}$. This yields a distinguished triangle (really a short exact sequence of coherent sheaves) in $\der\coh(\spr)$ \[\mathcal{K}\rightarrow \mathcal{V}(\mu)\rightarrow\str_{\spr}(\mu)\rightarrow\] with $\mathcal{K}\in\mathrm{D}_{<\mu}$. Recall that $\der\coh^{G\times\gm}(\spr)$ has a quotient functor \[\Pi_{<\mu} :  \der\coh^{G\times\gm}(\spr)\rightarrow \der\coh^{G\times\gm}(\spr)/\mathrm{D}_{<\mu}\] which admits left and right adjoints as in \cite[Lemma 4 (d)]{BEZ2}. Applying $\Pi_{<\mu}$ to the above distinguished triangle gives an isomorphism $\Pi_{<\mu}\mathcal{V}(\mu)\cong\Pi_{<\mu}\str_{\spr}(\mu)$. This yields the first isomorphism in 
\begin{align*} 
\Hom^i(\Pi_{<\mu} \str_{\spr}(\mu)\la m \ra,\Pi_{<\mu} \str_{\spr}(\mu)) &\cong \Hom^i(\Pi_{<\mu} \mathcal{V}(\mu)\la m\ra,\Pi_{<\mu} \str_{\spr}(\mu))\\
&\cong \Hom^i(\Pi_{<\mu} \mathcal{V}(\mu)\la m-\delta_\mu\ra,\Pi_{<\mu} \str_{\spr}(\mu)\la-\delta_\mu\ra)\\
&\cong \Hom^i(\mathcal{V}(\mu)\la m-\delta_\mu\ra,\excos(\mu))\\
&\cong \Hom^i(\mathcal{V}_I(\mu)\la m-\delta_\mu\ra,\expcos_I(\mu)).
\end{align*} The third follows by adjunction and since $\excos(\mu)\cong \Pi^R_{<\mu} \Pi_{<\mu} \str_{\spr}(\mu)\la-\delta_\mu\ra$ where $\Pi_{<\mu}^R$ denotes the right adjoint to $\Pi_{<\mu}$. The fourth isomorphism follows from $\upmu{I}^*\mathcal{V}_I(\mu)\cong \mathcal{V}(\mu)$ and adjunction. By \cite[Lemma 2 (d)]{BEZ2}, we get an isomorphism for the first term \[\Hom^i(\Pi_{<\mu} \str_{\spr}(\mu)\la m\ra,\Pi_{<\mu} \str_{\spr}(\mu)) \cong\begin{cases}
\bbk &\text{if } i=m=0\\
0 &\text{else} 
\end{cases}\] since the object $\str_{\spr}(\mu)$ is exceptional in $\der\coh^{G\times\gm}(\spr)$. A similar argument works for $\expstd_I(\mu)$. 

(3) Recall the short exact sequence of $B$ representations $0\rightarrow A\rightarrow N_I(\mu)\rightarrow \bbk_\mu\rightarrow 0$ above. Applying $\upmu{I*}p^*\mathscr{S}_{G/B}$ to the short exact sequence and twisting the $\gm$ action yields the distinguished triangle $\mathcal{V}_I(\mu)\la -\delta_\mu\ra\rightarrow A_I(\mu)\la -\delta_\mu\ra\rightarrow \mathcal{A}\rightarrow$ with $\mathcal{A}$ in $\mathrm{D}^I_{\leq \mu}$. 

The morphism $\mathcal{V}_I(\mu)\la -\delta_\mu\ra\rightarrow \expcos_I(\mu)$ is defined as the composition of the maps $\mathcal{V}_I(\mu)\la -\delta_\mu\ra\rightarrow A_I(\mu)\la -\delta_\mu\ra$ from above and $A_I(\mu)\la -\delta_\mu\ra \rightarrow \expcos_I(\mu)$ from Lemma \ref{lem:dtri}. Thus, we may apply the octahedral axiom to get the diagram:
 
 \begin{tikzpicture}[x=1.5cm,y=1.5cm]
\foreach \n/\x/\y/\t in {X/0/2/\mathcal{V}_I(\mu)\la -\delta_\mu\ra,Z/2/2/\expcos_I(\mu),W/4/2/\mathcal{C},TU/6/2/\mathcal{A}[1],Y/1/1/A_I(\mu)\la -\delta_\mu\ra,V/3/1/\mathcal{B},TY/5/1/A_I(\mu)\la -\delta_\mu\ra[1],U/2/0/\mathcal{A},TX/4/0/\mathcal{V}_I(\mu)\la -\delta_\mu\ra[1]} \node (\n) at (\x,\y) {$\t$};
\begin{scope}[->,every node/.style={midway,auto},font=\scriptsize]
\foreach \a/\b/\l in {X/Y/f,Y/U/f'} \draw[blue] (\a) -- node[swap]{} (\b); 
\draw[blue] (U) to [bend right] node[swap]{} (TX);
\foreach \a/\b/\l in {TX/TY/T(f),TY/TU/T(f')} \draw (\a) -- node[swap]{} (\b);
\foreach \a/\b/\l in {Y/Z/g,W/TY/g''} \draw[red] (\a) -- node{} (\b);
\draw[red] (Z) to [bend left] node{} (W);
\draw[green] (X) to [bend left] node{} (Z);
\foreach \a/\b/\l in {Z/V/h',V/TX/h''} \draw[green] (\a) -- node{} (\b);
\foreach \a/\b/\l in {U/V/j,V/W/j'} \draw[brown] (\a) -- node{} (\b);
\draw[brown] (W) to [bend left] node{} (TU);
\end{scope}.
\end{tikzpicture} 
\\This yields a distinguished triangle $\mathcal{A}\rightarrow\mathcal{B}\rightarrow\mathcal{C}\rightarrow$ relating $\mathcal{B}$ with the cones of $\mathcal{V}_I(\mu)\la -\delta_\mu\ra\rightarrow A_I(\mu)\la -\delta_\mu\ra$ and $A_I(\mu)\la -\delta_\mu\ra \rightarrow \expcos_I(\mu)$. The object $\mathcal{A}$ is in $\mathrm{D}^I_{\leq \mu}$ and $\mathcal{C}$ is in $\mathrm{D}^I_{< \mu}$ from Lemma \ref{lem:dtri}. Hence, the object $\mathcal{B}$ is in $\mathrm{D}^I_{\leq\mu}$.

For the second part of (3), a similar argument as above works here using different distinguished triangles. The first is $A_I(w_I\mu)\la \delta \ra\rightarrow\mathcal{V}_I(\mu)\la \delta\ra\rightarrow \mathcal{A}'\rightarrow$ with $\mathcal{A}'$ in $\mathrm{D}^I_{\leq \mu}$.
This comes from (a twist of) applying $\upmu{I*}p^*\mathscr{S}_{G/B}$ to the short exact sequence of $B$ representations $0\rightarrow \bbk_{w_I\mu}\rightarrow N_I(\mu)\rightarrow A'\rightarrow 0.$ The second is the distinguished triangle \eqref{eq:alttri} from proof of Lemma \ref{lem:dtri}.
\end{proof}

\subsection{A quasi-exceptional collection} 

For the notion of quasi-exceptional collection, we refer the reader to \cite[2.2]{BEZ2}. See also \cite[Definition 2.4]{A2} for a definition of graded quasi-exceptional collection. For a category $\mathcal{D}$ with a shift the grading functor $\la 1\ra$, we define the notation $\uHom(A, B) = \bigoplus_{n\in\mathbb{Z}}\Hom_\mathcal{D}(A, B\la n\ra)$. The notation $R\uHom$ and $\underline{\Ext}^i$ is defined similarly. 
\begin{prop} \label{prop:cosqex}The objects $(\expcos_I(\lambda), \lambda\in\wts^+_I)$ constitute a graded quasi-exceptional collection with respect to the poset $(\wts^+_I, \leq)$. In particular, they satisfy:
\begin{enumerate}
\item $R\uHom(\expcos_I(\mu), \expcos_I(\lambda)) = 0$ for all $\mu<\lambda$,
\item $\uHom^i(\expcos_I(\mu), \expcos_I(\mu)) = 0$ for $i<0$,
\item $\uHom(\expcos_I(\mu), \expcos_I(\mu)) \cong \bbk$,
\item If $i>0$ and $n\leq 0$, then $\Hom^i (\expcos_I(\mu), \expcos_I(\mu)\la n \ra) = 0$. 
\end{enumerate}

\end{prop}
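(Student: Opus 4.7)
My plan is to apply $R\uHom$ to the distinguished triangle from Lemma \ref{lem:vectorbundlehelp}(3),
\[
\mathcal{V}_I(\mu)\la -\delta_\mu\ra \to \expcos_I(\mu) \to \mathcal{B} \to, \qquad \mathcal{B}\in\mathrm{D}^I_{\leq\mu},
\]
and to read off each of the four conditions from the resulting long exact sequence, using the Hom vanishings in Lemma \ref{lem:vectorbundlehelp}(1) and the Hom computation in Lemma \ref{lem:vectorbundlehelp}(2). All four conditions will be proved simultaneously by induction on $\mu$ in the order $\leq$ on $\wts^+_I$.

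For condition (1), fix $\mu<\lambda$ in $\wts^+_I$ and set $Y^i_m := \Hom^i(\expcos_I(\mu), \expcos_I(\lambda)\la m\ra)$. Applying $R\uHom(-,\expcos_I(\lambda)\la m\ra)$ to the triangle, Lemma \ref{lem:vectorbundlehelp}(1) kills the $\mathcal{V}_I(\mu)\la-\delta_\mu\ra$ term, giving $Y^i_m \cong \Hom^i(\mathcal{B}, \expcos_I(\lambda)\la m\ra)$. Since $\mathcal{B}$ is generated by $A_I(\nu)\la n\ra$ for $\nu\leq\mu$, I would use Lemma \ref{lem:dtri} to replace each $A_I(\nu)\la-\delta_\nu\ra$ by $\expcos_I(\nu)$ modulo $\mathrm{D}^I_{<\nu}$; iterating, the $\nu<\mu$ contributions vanish by the inductive hypothesis on (1), while the $\nu=\mu$ contributions---coming via Lemma \ref{lem:waj} from the sub-extremal $W_I\mu$-weights inside the $B$-kernel of $N_I(\mu)\twoheadrightarrow\bbk_\mu$---produce a self-referential recursion
\[
Y^i_m = \sum_{s} c_s\, Y^{i}_{m + n_s}, \qquad n_s \in 2\mathbb{Z}\setminus\{0\},
\]
in which all the nonzero shifts $n_s$ have the same sign (a consequence of the uniform $\la-2\ell\ra$-twist in Lemma \ref{lem:waj}). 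Since $\expcos_I(\mu)$ and $\expcos_I(\lambda)$ are bounded complexes of $G\times\gm$-equivariant coherent sheaves on $\spr^I$, the graded module $\{Y^i_m\}_m$ is bounded in $m$ in the direction opposite to the sign of the $n_s$, and an induction in $m$ anchored at this bound forces $Y^i_m = 0$ for all $(i,m)$, establishing (1).

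For conditions (2), (3), and (4), apply $R\uHom(-,\expcos_I(\mu)\la m\ra)$ to the same triangle. Lemma \ref{lem:vectorbundlehelp}(2) shows the $\mathcal{V}_I(\mu)\la-\delta_\mu\ra$ contribution is $\bbk$ concentrated at $(i,m)=(0,0)$, while a parallel analysis of $\mathcal{B}$ (using the inductive hypothesis for $\nu<\mu$ together with the same $\gm$-grading argument to handle the $\nu=\mu$ self-reference) kills the remaining terms. The long exact sequence then yields $\uHom^i(\expcos_I(\mu),\expcos_I(\mu)\la m\ra)\cong\bbk$ when $(i,m)=(0,0)$ and zero otherwise, simultaneously delivering (2), (3), and a stronger form of (4).

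The main obstacle throughout is the self-referential appearance of $\expcos_I(\mu)$ inside $\mathcal{B}$ via the $A_I(\mu)$-summands that Lemma \ref{lem:waj} produces from the $W_I\mu$-orbit weights; this is precisely what the $\gm$-grading argument is designed to overcome, and it is the only place where the uniformity of the $\la-2\ell\ra$-shifts and the boundedness of $\expcos_I(\lambda)$ enter crucially.
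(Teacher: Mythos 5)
The overall strategy here matches the paper's: both apply $R\uHom$ to the distinguished triangle of Lemma~\ref{lem:vectorbundlehelp}(3), kill or compute the $\mathcal{V}_I(\mu)$-term via Lemma~\ref{lem:vectorbundlehelp}(1)--(2), decompose $\mathcal{B}$ using Lemma~\ref{lem:dtri} and Lemma~\ref{lem:waj}, and then resolve the $\expcos_I(\mu)$-self-reference through the uniform sign of the $\gm$-twists and a boundedness argument. The paper packages the final step by citing \cite[Lemma~2.2]{A2} (together with the $\star$-operation language of \cite[1.3.9]{BBD} and the identification $\expcos_I(w\mu)\la m\ra\cong\expcos_I(\mu)$ from Lemma~\ref{lem:stdtostd}), whereas you reprove that lemma's content inline; for part (1) this is essentially sound. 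A small imprecision: the self-referential term in $\mathcal{B}$ actually carries a cohomological shift $[1]$ (the cone of $\mathcal{V}_I(\mu)\la-\delta_\mu\ra\to A_I(\mu)\la-\delta_\mu\ra$ is $F(A)\la-\delta_\mu\ra[1]$), so the recursion relates $Y^i_m$ to $Y^{i-1}_{m+n_s}$, not $Y^i_{m+n_s}$; the vanishing still follows because $\Ext^{\bullet}$ between bounded complexes is bounded in the cohomological degree, but the recursion as you wrote it is not the correct one.

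There is, however, a genuine gap in your treatment of parts (2)--(4). You assert that the ``parallel analysis of $\mathcal{B}$ \dots\ kills the remaining terms,'' and conclude that $\uHom^i(\expcos_I(\mu),\expcos_I(\mu)\la m\ra)\cong\bbk$ precisely at $(i,m)=(0,0)$ and vanishes elsewhere. This is false for $I\neq\varnothing$. Unlike part (1), where $R\uHom(\mathcal{V}_I(\mu),\expcos_I(\lambda))=0$ and the recursion has no source, here the $\mathcal{V}_I(\mu)$-term contributes a $\bbk$ at $(0,0)$, and this seed propagates along the self-referential recursion: in effect $Y'^i_m\cong Y'^{i-1}_{m-2\ell}$ away from the two exceptional bidegrees, so $Y'^0_0\cong\bbk$ forces nonzero $\Ext^i(\expcos_I(\mu),\expcos_I(\mu)\la m\ra)$ for some $i>0$, $m>0$. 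That is exactly the difference between an exceptional and a merely \emph{quasi-}exceptional object, and it is why $\mathrm{ExCoh}^{G\times\gm}(\spr^I)$ ends up properly stratified rather than highest weight. What the analysis actually delivers (and what \cite[Lemma~2.2(2)]{A2} is designed to extract from $\bbk\in V\star V\la n_1\ra\star\cdots\star V\la n_k\ra$ with $n_i>0$) is precisely the stated (2), (3), (4): vanishing for $i<0$; $\co^0(V)\cong\bbk$ concentrated in $\gm$-degree $0$; and, for $i>0$, concentration of $\co^i(V)$ in strictly positive $\gm$-degrees. Your ``stronger form of (4)'' should be retracted, and the conclusion weakened accordingly.
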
 
\begin{proof} Part (1): We proceed as in the proof of \cite[Proposition 5.6]{A2}. Assume by induction with respect to $<$, $R\uHom(\expcos_I(\gamma), \expcos_I(\lambda)) = 0$ for all $\gamma<\mu <\lambda$. Recall the \textit{star operation} $\star$ from \cite[1.3.9]{BBD}: $B\in A\star C$ if there is a distinguished triangle $A\rightarrow B\rightarrow C\rightarrow$.

By Lemma \ref{lem:vectorbundlehelp} (1), we have $R\uHom(\mathcal{V}_I(\mu), \expcos_I(\lambda)) = 0.$ Hence, Lemma \ref{lem:vectorbundlehelp} (3) and the inductive hypothesis imply there are $m_i\in\mathbb{Z}$ and $\nu_i\in W_I(\mu)$ so that 
\begin{align*}0\in R\uHom&(\expcos_I(\nu_1)\la m_1\ra, \expcos_I(\lambda)) \star\cdots \\ \cdots \star R&\uHom(\expcos_I(\nu_k)\la m_k\ra, \expcos_I(\lambda))\star R\uHom(\expcos_I(\mu)\la m_{k+1}\ra , \expcos_I(\lambda)).\end{align*} By Lemma \ref{lem:stdtostd}, for each $w\in W_I$, we have an isomorphism $\expcos_I(w\mu)\la m\ra\cong \expcos_I(\mu)$ for some $m\in\mathbb{Z}$. Thus, we get \begin{align*}0\in R\uHom&(\expcos_I(\mu)\la m'_1\ra, \expcos_I(\lambda)) \star\cdots \\ \cdots \star R&\uHom(\expcos_I(\mu)\la m'_k\ra, \expcos_I(\lambda))\star R\uHom(\expcos_I(\mu)\la m'_{k+1}\ra , \expcos_I(\lambda)).\end{align*} By \cite[Lemma 2.2 (1)]{A2}, we have $R\uHom(\expcos_I(\mu), \expcos_I(\lambda)) = 0$  as desired. 


Parts (2)-(4): Let $V=R\uHom(\expcos_I(\mu), \expcos_I(\mu))$. Lemma \ref{lem:vectorbundlehelp} (2) gives an isomorphism $R\uHom(\mathcal{V}_I(\mu), \expcos_I(\mu)\la\delta_\mu\ra)\cong \bbk$, so a similar argument as above along with part (1) imply there are $n_1, \ldots, n_k >0$ so that $\bbk \in V\star V\la n_1\ra \star\cdots\star V\la n_k\ra$. Thus, \cite[Lemma 2.2 (2)]{A2} yields parts (2), (3), and (4). That is, for $i<0$, $\co^i(V)=\uHom^i(\expcos_I(\mu), \expcos_I(\mu)) = 0$. We have $\co^0(V)=\uHom(\expcos_I(\mu), \expcos_I(\mu)) \cong \bbk$ so in particular, $\Hom(\expcos_I(\mu), \expcos_I(\mu)\la m\ra) = 0$ for $m\neq 0$. For $i>0$, $\co^i(V) = \uHom^i(\expcos_I(\mu), \expcos_I(\mu))$ is concentrated in positive degrees, so for $m\leq 0$, we have $\Hom^i(\expcos_I(\mu), \expcos_I(\mu)\la m\ra) = 0$. \end{proof}

\begin{remark} A similar strategy as in this section using the partial order $\wtles_I$ on $\wts^+_I$ proves the collection $(A_I(\lambda), \lambda\in\wts^+_I)$ is also graded quasi-exceptional. However, this partially ordered set cannot be refined to be isomorphic to $(\mathbb{Z}_{\geq}, \leq)$ (except when $I = \Sigma$), so the results of \cite{BEZ2} will not apply to give an associated t-structure.  
\end{remark}

By the same techniques, the proper standard objects also form a quasi-exceptional collection with respect to the opposite order on $\wts^+_I$. 
\begin{prop} \label{prop:stdqex}The objects $(\expstd_I(\lambda), \lambda\in\wts^+_I)$  satisfy:
\begin{enumerate}
\item $R\uHom(\expstd_I(\lambda), \expstd_I(\mu)) = 0$ for all $\mu<\lambda$,
\item $\uHom^i(\expstd_I(\mu), \expstd_I(\mu)) = 0$ for $i<0$,
\item $\uHom(\expstd_I(\mu), \expstd_I(\mu)) \cong \bbk$,
\item If $i>0$ and $n\leq 0$, then $\Hom^i (\expstd_I(\mu), \expstd_I(\mu)\la n \ra) = 0$.
\end{enumerate}
\end{prop}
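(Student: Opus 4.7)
The plan is to mirror the proof of Proposition \ref{prop:cosqex} step by step, with two substitutions: (a) replace the first distinguished triangle of Lemma \ref{lem:vectorbundlehelp}(3) by the second one,
\[
\expstd_I(\mu)\rightarrow\mathcal{V}_I(\mu)\la\delta\ra\rightarrow\mathcal{B}'\rightarrow
\]
(with $\mathcal{B}'\in\mathrm{D}^I_{\leq\mu}$ and $\delta=-\delta_{w_I\mu}-\delta^I_{w_I\mu}$), and (b) use the standard-side Hom-vanishings from Lemma \ref{lem:vectorbundlehelp}(1)-(2) in place of the costandard ones. Correspondingly, apply $R\uHom(\expstd_I(\lambda),-)$ instead of $R\uHom(-,\expcos_I(\lambda))$.

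For part (1), fix $\mu<\lambda$ and induct with respect to $<$ on $\wts^+_I$, assuming $R\uHom(\expstd_I(\lambda),\expstd_I(\nu))=0$ for all $\nu<\mu$. Applying $R\uHom(\expstd_I(\lambda),-)$ to the triangle above at weight $\mu$, Lemma \ref{lem:vectorbundlehelp}(1) annihilates the middle term $R\uHom(\expstd_I(\lambda),\mathcal{V}_I(\mu)\la\delta\ra)$. To handle $R\uHom(\expstd_I(\lambda),\mathcal{B}')$, note that $\mathcal{B}'\in\mathrm{D}^I_{\leq\mu}$, so by Lemma \ref{lem:dtri} together with Lemma \ref{lem:waj} it sits in an iterated extension of shifts of $\expstd_I(\nu)$ for $\nu\in\wts^+_I$ with $\nu\leq\mu$. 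Contributions with $\nu<\mu$ vanish by the inductive hypothesis; the $W_I\mu$-conjugate contributions, after identification with shifts of $V:=R\uHom(\expstd_I(\lambda),\expstd_I(\mu))$ via Lemma \ref{lem:stdtostd}(1), allow me to invoke \cite[Lemma 2.2(1)]{A2} exactly as in the costandard case to force $V=0$.

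For parts (2)-(4), let $V:=R\uHom(\expstd_I(\mu),\expstd_I(\mu))$. Lemma \ref{lem:vectorbundlehelp}(2) provides the key one-dimensionality $R\uHom(\expstd_I(\mu),\mathcal{V}_I(\mu)\la\delta\ra)\cong\bbk$. The same reduction as in (1), now using part (1) just proved to kill the $\nu<\mu$ contributions in $\mathcal{B}'$, shows that $R\uHom(\expstd_I(\mu),\mathcal{B}')$ lies in a star product of positive shifts of $V$. Combined with the triangle above this yields $\bbk\in V\star V\la n_1\ra\star\cdots\star V\la n_k\ra$ with $n_i>0$, and \cite[Lemma 2.2(2)]{A2} then produces (2), (3), and (4).

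The delicate part is verifying strict positivity of the $n_i$ in (2)-(4). Because the definition of $\expstd_I(\mu)$ carries the twist $\la-2\delta^I_{w_I\mu}\ra$ and the relevant triangles involve the shifts $\la\delta\ra$, $\la-\delta_\nu\ra$, and the $\la-2\ell\ra$ from Lemma \ref{lem:waj}, one must track the gradings carefully to ensure that after identifying each $W_I\mu$-conjugate contribution in $\mathcal{B}'$ via Lemma \ref{lem:stdtostd}(1), the resulting shifts of $V$ are indeed strictly positive.
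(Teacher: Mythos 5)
The paper's own "proof" of Proposition~\ref{prop:stdqex} is merely the one-sentence remark preceding it ("By the same techniques"), and your proposal spells out exactly what that remark intends: replace the first triangle of Lemma~\ref{lem:vectorbundlehelp}(3) by the second, use the standard-side vanishings in Lemma~\ref{lem:vectorbundlehelp}(1)--(2), apply $R\uHom(\expstd_I(\lambda),-)$ in place of $R\uHom(-,\expcos_I(\lambda))$, and use Lemma~\ref{lem:stdtostd}(1) for the $W_I$-conjugate identifications. Your closing caveat about tracking the internal $\gm$-shifts to secure strict positivity of the $n_i$ in parts (2)--(4) is a fair observation, but note that the published proof of Proposition~\ref{prop:cosqex} treats that point at the same level of detail (it is simply asserted there as well), so you are not introducing a new gap relative to the paper.
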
   

\begin{cor} \label{cor:mut}
Let $\lambda\in\wts^+_I$. 
\begin{enumerate}
\item $\mathrm{D}^I_{< \lambda}$ coincides with the triangulated category generated by the collection $(\expcos_I(\gamma)\la m\ra,\gamma<\lambda \textup{ with } \gamma\in\wts^+_I, m\in\mathbb{Z})$.
\item $\mathrm{D}^I_{< \lambda}$ coincides with the triangulated category generated by the collection $(\expstd_I(\gamma)\la m\ra, \gamma<\lambda \textup{ with } \gamma\in\wts^+_I, m\in\mathbb{Z})$.
\item For all $\mathcal{G}\in  \mathrm{D}^I_{< \lambda}$, we have $\Hom(\mathcal{G}, \expcos_I(\lambda)) = \Hom(\expstd_I(\lambda), \mathcal{G}) = 0$.
\end{enumerate}
\end{cor}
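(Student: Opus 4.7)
The plan is to prove parts (1) and (2) by induction on the totally ordered set $(\wts^+_I,\leq)$, using the distinguished triangles of Lemma \ref{lem:dtri} as the bridge between Andersen--Jantzen sheaves and proper (co)standards, and then to deduce (3) by combining parts (1)--(2) with the quasi-exceptional vanishing from Propositions \ref{prop:cosqex} and \ref{prop:stdqex}.

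For part (1), I would first verify that $\expcos_I(\gamma)\la m\ra \in \mathrm{D}^I_{<\lambda}$ for all $\gamma \in \wts^+_I$ with $\gamma < \lambda$ and $m \in \mathbb{Z}$. The triangle
\[ \mathcal{G} \rightarrow A_I(\gamma)\la-\delta_\gamma\ra \rightarrow \expcos_I(\gamma) \rightarrow \]
from Lemma \ref{lem:dtri} has $\mathcal{G} \in \mathrm{D}^I_{<\gamma} \subseteq \mathrm{D}^I_{<\lambda}$ and middle term a generator of $\mathrm{D}^I_{<\lambda}$, so the third term lies in $\mathrm{D}^I_{<\lambda}$ as well. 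For the reverse inclusion, I would show by induction that each generator $A_I(\mu)\la m\ra$ with $\mu \in \wts^+_I$, $\mu < \lambda$ lies in the triangulated subcategory $\mathcal{E}$ generated by the $\expcos_I(\gamma)\la n\ra$ with $\gamma < \lambda$: rotating the same triangle, $A_I(\mu)\la-\delta_\mu\ra$ is an extension of $\mathcal{G} \in \mathrm{D}^I_{<\mu}$ (which lies in $\mathcal{E}$ by the inductive hypothesis, using transitivity of $<$) and $\expcos_I(\mu)$ (which is a generator of $\mathcal{E}$). Part (2) follows by the mirror argument using the second triangle of Lemma \ref{lem:dtri},
\[ \expstd_I(\lambda) \rightarrow A_I(\lambda)\la -\delta_\lambda\ra \rightarrow \mathcal{G}' \rightarrow, \]
with the roles of the extreme terms swapped.

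For part (3), part (1) shows that $\mathrm{D}^I_{<\lambda}$ is generated as a triangulated subcategory by the objects $\expcos_I(\gamma)\la m\ra$ with $\gamma \in \wts^+_I$, $\gamma < \lambda$, and $m \in \mathbb{Z}$. Proposition \ref{prop:cosqex}(1) gives $R\uHom(\expcos_I(\gamma), \expcos_I(\lambda)) = 0$ for $\gamma < \lambda$, hence $\Hom(\expcos_I(\gamma)\la m\ra, \expcos_I(\lambda)) = 0$ for all $m \in \mathbb{Z}$. The full subcategory $\{\mathcal{G} : \Hom(\mathcal{G}, \expcos_I(\lambda)) = 0\}$ is triangulated and closed under grading twists and shifts, so a standard devissage propagates the vanishing to all of $\mathrm{D}^I_{<\lambda}$. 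The vanishing $\Hom(\expstd_I(\lambda), \mathcal{G}) = 0$ for $\mathcal{G}\in\mathrm{D}^I_{<\lambda}$ follows dually from part (2) and Proposition \ref{prop:stdqex}(1). The main subtlety, rather than a genuine obstacle, is to organize the induction in (1)--(2) so that the cofiber terms $\mathcal{G}, \mathcal{G}'$ have already been handled at each step; here the identification of $(\wts^+_I, \leq)$ with $(\mathbb{Z}_{\geq 0}, \leq)$ makes the induction run cleanly, with the vacuous base case when $\lambda$ is the minimum of $\wts^+_I$.
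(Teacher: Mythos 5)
Your proposal is correct and takes essentially the same route as the paper: prove (1) and (2) by induction on $(\wts^+_I,\leq)$ using the two distinguished triangles of Lemma~\ref{lem:dtri}, then deduce (3) from parts (1)--(2) and the quasi-exceptional Hom-vanishing in Propositions~\ref{prop:cosqex}(1) and~\ref{prop:stdqex}(1). One small precision worth making in the devissage for (3): the subcategory $\{\mathcal{G} \mid \Hom(\mathcal{G}, \expcos_I(\lambda)) = 0\}$ is not itself closed under shifts, so you should really run the devissage with $\{\mathcal{G} \mid \Hom(\mathcal{G}, \expcos_I(\lambda)[i]) = 0 \text{ for all } i\}$ (equivalently, the subcategory of $\mathcal{G}$ with $R\Hom(\mathcal{G},\expcos_I(\lambda))=0$), which is triangulated and stable under $\la 1\ra$; the generators lie in it precisely because Proposition~\ref{prop:cosqex}(1) gives the full $R\uHom$ vanishing rather than mere degree-zero vanishing.
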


\begin{proof} Part (1) can be proven by induction on $(\wts^+_I, \leq)$ using the first distinguished triangle in Lemma \ref{lem:dtri}. Part (2) is similar. Now, part (3) follows from the first two parts along with part (1) of Propositions \ref{prop:cosqex} and \ref{prop:stdqex}.
\end{proof}

\begin{remark} The distinguished triangles in Lemma \ref{lem:dtri} along with Corollary \ref{cor:mut} justify referring to the collection $(\expcos_I(\lambda), \lambda\in\wts^+_I)$ as the $\leq$-{\em{mutation}} of the collection $(A_I(\lambda), \lambda\in\wts^+_I)$, although we were not able to perform mutation directly. 
\end{remark}

For each $\mu\in\wts^+_I$, we get a morphism \begin{equation}\label{eq:pstdpcosmap}\expstd_I(\mu)\rightarrow \expcos_I(\mu)\end{equation} by composing the maps $\expstd_I(\mu)\rightarrow A_I(\mu)\la -\delta_\mu\ra$ and $A_I(\mu)\la -\delta_\mu\ra\rightarrow\expcos_I(\mu)$ from the triangles in Lemma \ref{lem:dtri}.
Next, we must confirm that our two quasi-exceptional sets above are dual to each other in the sense of \cite[2.2]{BEZ2}. See also \cite[Definition 2.6]{A2}.
\begin{prop}\label{prop:dual} The collection of objects $(\expstd_I(\lambda), \lambda\in\wts^+_I)$ satisfy
\begin{enumerate}
\item $\expstd_I(\lambda)\cong \expcos_I(\lambda) \,\mathrm{ mod }\, \mathrm{D}^I_{<\lambda}$
\item If $\lambda>\mu$, then $R\uHom(\expstd_I(\lambda), \expcos_I(\mu)) = 0$
\end{enumerate}
That is, the quasi-exceptional collection $(\expcos_I(\lambda), \lambda\in\wts^+_I)$ is dualizable with dual collection $(\expstd_I(\lambda), \lambda\in\wts^+_I)$.
\end{prop}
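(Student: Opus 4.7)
My plan is to derive (1) directly from Lemma \ref{lem:dtri}, and to derive (2) by an induction on $\mu$ that reduces the $R\uHom$-vanishing to Proposition \ref{prop:stdqex}(1).

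For (1), the two distinguished triangles in Lemma \ref{lem:dtri} have respective terms $\mathcal{G},\mathcal{G}'$ in $\mathrm{D}^I_{<\lambda}$ and share the middle object $A_I(\lambda)\la-\delta_\lambda\ra$. Passing to the Verdier quotient $\der\coh^{G\times\gm}(\spr^I)/\mathrm{D}^I_{<\lambda}$ therefore yields isomorphisms $\expcos_I(\lambda)\cong A_I(\lambda)\la-\delta_\lambda\ra\cong\expstd_I(\lambda)$. The canonical map \eqref{eq:pstdpcosmap} was defined as the composition of these two isomorphisms through $A_I(\lambda)\la-\delta_\lambda\ra$, so it is itself an isomorphism in the quotient, establishing (1).

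For (2), fix $\lambda$ and induct on $\mu$ over $\{\mu\in\wts^+_I:\mu<\lambda\}$, which is well-ordered via the identification $(\wts^+_I,\leq)\simeq(\mathbb{Z}_{\geq 0},\leq)$ recalled in Subsection 2.3. The inductive hypothesis is that $R\uHom(\expstd_I(\lambda),\expcos_I(\gamma))=0$ for every $\gamma\in\wts^+_I$ with $\gamma<\mu$. Since $R\uHom$ is insensitive to the grading shift $\la n\ra$ and since $\mathrm{D}^I_{<\mu}$ is generated as a triangulated subcategory by $\{\expcos_I(\gamma)\la n\ra:\gamma<\mu,\,n\in\mathbb{Z}\}$ (Corollary \ref{cor:mut}(1)), this upgrades automatically to $R\uHom(\expstd_I(\lambda),\mathcal{H})=0$ for every $\mathcal{H}\in\mathrm{D}^I_{<\mu}$. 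Now apply $R\uHom(\expstd_I(\lambda),-)$ to both triangles of Lemma \ref{lem:dtri} for $\mu$; because the terms $\mathcal{G}_\mu$ and $\mathcal{G}'_\mu$ both lie in $\mathrm{D}^I_{<\mu}$, the resulting long exact sequences collapse to the chain of isomorphisms
\[ R\uHom(\expstd_I(\lambda),\expcos_I(\mu))\;\cong\; R\uHom(\expstd_I(\lambda),A_I(\mu)\la-\delta_\mu\ra)\;\cong\; R\uHom(\expstd_I(\lambda),\expstd_I(\mu)). \]
The right-hand side vanishes by Proposition \ref{prop:stdqex}(1) (since $\mu<\lambda$), closing the induction. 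The base case is immediate: for the minimal $\mu$, the vanishing $\mathrm{D}^I_{<\mu}=0$ forces $\expstd_I(\mu)=A_I(\mu)\la-\delta_\mu\ra=\expcos_I(\mu)$, and the conclusion follows from Proposition \ref{prop:stdqex}(1) directly.

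The truly difficult work has already been carried out earlier in the section, namely the star-operation proofs of Propositions \ref{prop:cosqex} and \ref{prop:stdqex} and the verification (Lemma \ref{lem:dtri}) that $A_I(\mu)\la-\delta_\mu\ra$ represents both $\expstd_I(\mu)$ and $\expcos_I(\mu)$ modulo $\mathrm{D}^I_{<\mu}$; against this backdrop, Proposition \ref{prop:dual} is essentially a formal consequence. The only conceptual point worth emphasizing is the role of $A_I(\mu)\la-\delta_\mu\ra$ as a common pivot — this is exactly what permits one to convert the already-known proper-standard to proper-standard vanishing into the dual-pair vanishing against proper-costandards.
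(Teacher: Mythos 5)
Your proof of (1) is correct and matches the paper's treatment, which also points to Lemma \ref{lem:dtri}. Your proof of (2) is correct but takes a longer route than necessary. The paper simply observes that the first distinguished triangle of Lemma \ref{lem:dtri}, applied to $\mu$, places $\expcos_I(\mu)$ in $\mathrm{D}^I_{\leq\mu}\subset\mathrm{D}^I_{<\lambda}$, and then invokes Corollary \ref{cor:mut}(3) verbatim, which already asserts $\Hom(\expstd_I(\lambda),\mathcal{G})=0$ for all $\mathcal{G}\in\mathrm{D}^I_{<\lambda}$ (and this stays true after the internal grading/cohomological shifts because $\mathrm{D}^I_{<\lambda}$ is stable under them). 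Your inductive argument, pivoting through $A_I(\mu)\la-\delta_\mu\ra$ via both triangles of Lemma \ref{lem:dtri} to reduce to Proposition \ref{prop:stdqex}(1), is sound, but it effectively reproves a special case of Corollary \ref{cor:mut}(3) from scratch: that corollary was itself obtained from Corollary \ref{cor:mut}(1)--(2) together with Propositions \ref{prop:cosqex}(1) and \ref{prop:stdqex}(1). The two routes thus rest on the same underlying inputs; yours just rederives the intermediate consequence rather than citing it. No gap, but you could have skipped the induction entirely by noting $\expcos_I(\mu)\in\mathrm{D}^I_{<\lambda}$ and appealing directly to Corollary \ref{cor:mut}(3).
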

\begin{proof} Part (1) is an immediate consequence of Lemma \ref{lem:dtri}. For part (2), the first distinguished triangle in Lemma \ref{lem:dtri} and that $\mu<\lambda$ imply that $\expcos_I(\mu)\in\mathrm{D}^I_{<\lambda}$. Hence, the required vanishing follows from Corollary \ref{cor:mut} part (3).
\end{proof}

Now we come to the main theorem of this section which defines the {\it exotic} t-structure on $\der\coh^{G \times \mathbb{G}_m}(\spr^I)$. In the statement, we use the notation $\la S \ra$ for a set of objects $S$ to mean the full subcategory generated by extensions of objects in $S$.
\begin{thm}\label{thm:exotict-str}
There is a $t$-structure $(^{E}{\mathcal{D}}^{\leq 0}, ^{E}{\!\mathcal{D}}^{\geq 0})$ on $\mathcal{D}:=\der\coh^{G \times \mathbb{G}_m}(\spr^I)$ 
 given by 
 \[^{E}{\mathcal{D}}^{\leq 0} = \la\{ \expstd_I(\lambda)\la m\ra[d] \,|\, d\geq0, \lambda\in\wts^+_I, m\in\mathbb{Z}\} \ra\]
\[^{E}{\mathcal{D}}^{\geq 0} = \la\{ \expcos_I(\lambda)\la m\ra[d] \,|\, d\leq0, \lambda\in\wts^+_I, m\in\mathbb{Z}\}\ra.\]
Moreover, $\upmu{I*}$ is t-exact with respect to these exotic t-structures, and the heart $\mathrm{ExCoh}^{G \times \mathbb{G}_m}(\spr^I) \subset \der\coh^{G \times \mathbb{G}_m}(\spr^I)$
is stable under $\langle 1 \rangle$ and contains $\expcos_I(\lambda)$ and $\expstd_I(\lambda)$ for all $\lambda\in\wts^+_I$.
\end{thm}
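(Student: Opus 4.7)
The plan is to invoke the general construction that produces a t-structure from a dualizable graded quasi-exceptional collection, as in \cite[Proposition~2]{BEZ2} (with the graded refinement used in \cite[Theorem~2.10]{A2}). All hypotheses have been assembled in the preceding subsections: Proposition~\ref{prop:cosqex} shows that $(\expcos_I(\lambda))_{\lambda\in\wts^+_I}$ is a graded quasi-exceptional collection, Propositions~\ref{prop:stdqex} and~\ref{prop:dual} exhibit $(\expstd_I(\lambda))_{\lambda\in\wts^+_I}$ as the dual collection, Corollary~\ref{lem:cosgen} provides generation of $\mathcal{D}$, and the identification of $(\wts^+_I,\leq)$ with $(\mathbb{Z}_{\geq 0},\leq)$ recorded at the end of Subsection~2.3 ensures the indexing poset has the form required by the machinery. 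Feeding these into the general recipe yields a bounded t-structure on $\mathcal{D}$.

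Next, I would verify that the abstract aisle and coaisle produced by the construction match the explicit formulas in the statement. One direction is tautological: $\expcos_I(\lambda)\la m\ra[d]$ with $d\leq 0$ lies in the coaisle by construction, and symmetrically for $\expstd_I(\lambda)\la m\ra[d]$ with $d\geq 0$ in the aisle. The reverse inclusions follow from the general theory once we know that the two collections generate $\mathcal{D}$, which is Corollary~\ref{lem:cosgen}. To place both $\expcos_I(\lambda)$ and $\expstd_I(\lambda)$ in the heart, I would argue by induction along $(\wts^+_I,\leq)$: the comparison morphism~\eqref{eq:pstdpcosmap} together with Lemma~\ref{lem:dtri} and Proposition~\ref{prop:dual}(1) show that $\expcos_I(\lambda)$ and $\expstd_I(\lambda)$ differ, modulo $\mathrm{D}^I_{<\lambda}$, by objects inductively known to be in the heart, so both lie simultaneously in $^E\mathcal{D}^{\leq 0}$ and $^E\mathcal{D}^{\geq 0}$. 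Closure of the heart under $\la 1\ra$ is immediate, because both generating families are manifestly stable under $\la m\ra$ for every $m\in\mathbb{Z}$.

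For the t-exactness of $\upmu{I*}$, I would compare generators of the exotic t-structure on $\spr$ with those of the new t-structure on $\spr^I$. By Lemma~\ref{lem:stdtostd}, we have
\[
\upmu{I*}\,\exstd(\lambda) \cong \expstd_I(\dm_I(\lambda))\la m_\lambda\ra,\qquad \upmu{I*}\,\excos(\lambda)\cong \expcos_I(\dm_I(\lambda))\la m'_\lambda\ra
\]
for suitable integers $m_\lambda,m'_\lambda$. Thus $\upmu{I*}$ sends the generating family of $^{E}\mathrm{D}^{\leq 0}(\spr)$ into $^{E}\mathcal{D}^{\leq 0}$ and the generating family of $^{E}\mathrm{D}^{\geq 0}(\spr)$ into $^{E}\mathcal{D}^{\geq 0}$; since $\upmu{I*}$ is triangulated and commutes with shifts and $\la 1\ra$, it is therefore t-exact.

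The main obstacle I anticipate is not any single calculation but the bookkeeping around the abstract-to-explicit comparison of t-structures: one must ensure that the ``completion of Bruhat order'' from \cite[Subsection~9.4]{AR} makes the indexing poset of our quasi-exceptional collection genuinely well-ordered of order type $\omega$, which is exactly what is needed for the truncation functors of \cite{BEZ2} to converge after finitely many steps. The isomorphism $(\wts^+_I,\leq)\cong(\mathbb{Z}_{\geq 0},\leq)$ resolves this point, so in the end the whole argument is a careful application of the $\spr$-case together with the quasi-exceptional input collected in Section~\ref{sec:qexcp}.
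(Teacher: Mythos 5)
Your overall strategy agrees with the paper's: the existence of the t-structure follows from plugging the dualizable graded quasi-exceptional collection (Propositions~\ref{prop:cosqex}, \ref{prop:stdqex}, \ref{prop:dual}), generation (Corollary~\ref{lem:cosgen}), and the well-ordering $(\wts^+_I,\leq)\cong(\mathbb{Z}_{\geq 0},\leq)$ into the machinery of \cite{BEZ2}. (Minor citation point: the relevant result producing the t-structure is \cite[Proposition~1]{BEZ2}, with the graded refinement as in \cite[Proposition~4]{BEZ}; Proposition~2 of \cite{BEZ2} concerns the heart.) Your t-exactness argument for $\upmu{I*}$ via Lemma~\ref{lem:stdtostd} and the defining formulas of $\expcos_I,\expstd_I$ is exactly the paper's.

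The gap is in the argument that $\expcos_I(\lambda)$ and $\expstd_I(\lambda)$ lie in the heart. Your induction along $(\wts^+_I,\leq)$ only tells you that the cone of the comparison morphism $\expstd_I(\lambda)\to\expcos_I(\lambda)$ lies in $\mathrm{D}^I_{<\lambda}$, i.e.\ is built from $\expstd_I(\gamma)\la m\ra[d]$ with $\gamma<\lambda$ and \emph{arbitrary} $d\in\mathbb{Z}$. Knowing, inductively, that the unshifted objects $\expstd_I(\gamma)$ sit in the heart gives no control whatsoever on the cohomological amplitude of that cone, so you cannot conclude from the triangle that $\expcos_I(\lambda)$ lands in the aisle $^{E}\mathcal{D}^{\leq 0}$ (you only get, tautologically, that it lies in the coaisle). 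This is precisely the issue the paper flags: \cite[Proposition~1]{BEZ2} alone does not place the proper (co)standards in the heart. The paper instead \emph{first} establishes t-exactness of $\upmu{I*}$ (the argument you gave) and \emph{then} invokes \cite[Corollary~3.10]{MR}, which says $\exstd(\lambda),\excos(\lambda)$ already lie in $\mathrm{ExCoh}^{G\times\gm}(\spr)$; applying the t-exact $\upmu{I*}$ and the definitions of $\expcos_I,\expstd_I$ as pushforwards of (grading shifts of) $\excos,\exstd$ closes the argument. Reordering your two final steps and replacing the induction with this transport argument would repair the proof.
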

\begin{proof}Propositions \ref{prop:cosqex} and Corollary \ref{prop:dual} yield that the objects $(\expcos_I(\lambda), \wts^+_I)$ form a dualizable graded quasi-exceptional collection with dual collection $(\expstd_I(\lambda), \wts^+_I)$. Moreover, Corollary \ref{lem:cosgen} says that they also generate our category, and our set $(\wts^+_I, <)$ is well-ordered. Thus, \cite[Proposition 1]{BEZ2} proves we have a t-structure. (Technically, \cite[Proposition 1]{BEZ2} is considering the ungraded case, but the same argument works for the graded case as well, see \cite[Proposition 4]{BEZ}.) It's clear the heart is stable under $\la 1\ra$, and that $\upmu{I*}$ is t-exact by definition of the two t-structures and our definition of the proper (co) standards. Note that \cite[Proposition 1]{BEZ2} does not guarantee that proper standards and proper costandards are in the heart. However, it is known that $\excos(\lambda)$ and $\exstd(\lambda)$ (for all $\lambda\in\wts$) are in $\mathrm{ExCoh}^{G \times \mathbb{G}_m}(\spr)$ by \cite[Corollary 3.10]{MR}, so the same is true here by exactness of $\upmu{I*}$. \end{proof}

\begin{remark} \begin{enumerate}
\item Exactness of $\upmu{I*}$ along with \cite[Proposition 3.12, (1)]{MR} imply the generalized Andersen--Jantzen sheaf $A_I(\lambda)$ is exotic for all $\lambda\in\wts$. Similarly, $\mathcal{V}_I(\lambda)$ for $\lambda\in\wts^+_I$ is exotic since $\mathcal{V}(\lambda)$ has a filtration by line bundles.
\item It is easy to see that the functor $\mu_{I*}: \der\coh^{G\times\gm}(\spr^I)\rightarrow \der\coh^{G\times\gm}(\nilp)$ is also exact taking exotic sheaves to perverse coherent sheaves. 
\item Lemma \ref{lem:inducedaj} proves that the ``induction" functor $\der\coh^{G\times\gm}(G\times^{P_I}\nilp_L)\rightarrow \der\coh^{G\times\gm}(\spr^I)$ is exact as well taking perverse coherent sheaves to exotic sheaves since the generalized Andersen--Jantzen sheaves are exotic.
\end{enumerate}
\end{remark}

\section{Study of the heart}\label{Sec: heart}
The t-structure in \cite[Proposition 1]{BEZ2} arising from a quasi-exceptional collection is obtained by gluing or recollement. For each $\lambda\in \wts^+_I$, let $i_\lambda: \mathrm{D}^I_{<\lambda}\rightarrow\mathrm{D}^I_{\leq\lambda}$ and $\Pi_\lambda: \mathrm{D}^I_{\leq\lambda}\rightarrow \mathrm{D}^I_{\leq\lambda}/\mathrm{D}^I_{<\lambda}$ denote the natural inclusion and quotient functors. Then \cite[Lemma 4]{BEZ2} guarantees existence of left and right adjoints to both $i_\lambda$ and $\Pi_\lambda$ which we denote by $i^L_\lambda$, $i^R_\lambda$, $\Pi^L_\lambda$, and $\Pi^R_\lambda$.
That is, for each $\lambda\in \dom_I$, we have a recollement diagram 

\[
\begin{tikzpicture}
\node (a) at (0,0) {$\mathrm{D}^I_{<\lambda}$};
\node (b) at (4,0) {$\mathrm{D}^I_{\leq\lambda}$};
\node (c) at (8,0) {$\mathrm{D}^I_{\leq\lambda}/\mathrm{D}^I_{<\lambda}$};
\draw[-latex] (a) -- (b);
\draw[-latex, above] (a) to node {$i_\lambda$} (b);
\draw[-latex, above] (b) to node {$\Pi_\lambda$} (c);
\draw[-latex,bend right=30, above] (b)  to node {$i_\lambda^L$} (a);
\draw[-latex,bend left=30, above] (b) to node {$i_\lambda^R$} (a);
\draw[-latex,bend right=30, above]  (c) to node {$\Pi_\lambda^L$} (b);
\draw[-latex,bend left=30, above]  (c) to node {$\Pi_\lambda^R$} (b);
\end{tikzpicture}
\]
Each of the above categories has induced compatible t-structures such that $i_\lambda, \Pi_\lambda$ are exact, $i^R_\lambda, \Pi_\lambda^R$ are left exact, and $i^L_\lambda, \Pi_\lambda^L$ are right exact, see \cite[Section 1.4, Proposition 1.4.16]{BBD}.
Now, Lemma \ref{lem:dtri} and Corollary \ref{cor:mut} part (3) imply \[\expcos_I(\lambda)\cong \Pi_\lambda^R\Pi_\lambda(A_I(\lambda)\la-\delta_\lambda\ra) \hspace{.5cm}\textup{ and }\hspace{.5cm}\expstd_I(\lambda)\cong\Pi_\lambda^L\Pi_\lambda (A_I(\lambda)\la -\delta_\lambda\ra).\] For each $\lambda\in \wts^+_I$, we also define objects 
\begin{equation} \label{eq:(co)std}\excos_I(\lambda):=  \Pi_\lambda^R\Pi_\lambda(\mathcal{V}_I(\lambda)\la-2\delta^I_{w_I\lambda}-\delta_\lambda\ra)\hspace{.5cm}\textup{ and }\hspace{.5cm}\exstd_I(\lambda):=\Pi_\lambda^L\Pi_\lambda (\mathcal{V}_I(\lambda)\la -\delta_\lambda\ra),\end{equation} and refer to them as costandard and standard objects respectively. Recall also the natural map from \eqref{eq:pstdpcosmap} $\expstd_I(\lambda)\rightarrow \expcos_I(\lambda)$; denote the image by $\exirr_I(\lambda)$. Then \cite[Proposition 2]{BEZ2} guarantees that $\exirr_I(\lambda)$ is irreducible, and any irreducible in $\mathrm{ExCoh}^{G \times \mathbb{G}_m}(\spr^I)$ is isomorphic to  $\exirr_I(\lambda)\la m\ra$ for some $\lambda\in\wts^+_I, m\in\mathbb{Z}$. 

The following proposition shows $\upmu{I*}:\mathrm{ExCoh}^{G \times \mathbb{G}_m}(\spr)\rightarrow\mathrm{ExCoh}^{G \times \mathbb{G}_m}(\spr^I)$ factors through a Serre quotient. 

\begin{prop} The exact functor $\upmu{I*}:\mathrm{ExCoh}^{G \times \mathbb{G}_m}(\spr)\rightarrow\mathrm{ExCoh}^{G \times \mathbb{G}_m}(\spr^I)$ satisfies $\upmu{I*}(\exirr(\lambda)) = 0$ in case $\lambda\not\in-\wts^+_I$.  \end{prop}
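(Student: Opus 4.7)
The plan is to show that the natural morphism $\upmu{I*}\exstd(\lambda)\to\upmu{I*}\excos(\lambda)$, whose image in the exotic heart of $\spr^I$ equals $\upmu{I*}\exirr(\lambda)$ by t-exactness of $\upmu{I*}$ (Theorem \ref{thm:exotict-str}), actually vanishes when $\lambda\notin-\wts^+_I$. The strategy is to rewrite both source and target via Lemma \ref{lem:stdtostd} as graded shifts of the proper (co)standards for $\mu':=\dm_I(\lambda)$; the morphism will then land in a graded Hom of the ``wrong'' degree, which we can force to be zero.

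First, I would fix $\mu':=\dm_I(\lambda)$ and observe that $\lambda\notin-\wts^+_I$ is equivalent to $\lambda\neq w_I\mu'$. A standard analysis of minimal-length representatives for $W_{I,\mu'}\backslash W_I$ shows that on the orbit $W_I\mu'$ the value of $\delta^I_\bullet$ is maximized uniquely at $w_I\mu'$: if $w_{I,\mu'}$ denotes the longest element of the stabilizer $W_{I,\mu'}$, then for any $w\in W_I$ the minimal representative of $W_{I,\mu'}w^{-1}$ has length at most $\ell(w_I)-\ell(w_{I,\mu'})$, with equality only on the coset $W_{I,\mu'}w_I$, which corresponds to $w\mu'=w_I\mu'$. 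Hence $\delta^I_\lambda<\delta^I_{w_I\mu'}$.

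Next, I would combine Lemma \ref{lem:stdtostd} with the definitions of $\expstd_I$ and $\expcos_I$ to produce the identifications
\[\upmu{I*}\exstd(\lambda)\;\cong\;\expstd_I(\mu')\la 2\delta^I_{w_I\mu'}-\delta^I_\lambda\ra,\qquad \upmu{I*}\excos(\lambda)\;\cong\;\expcos_I(\mu')\la \delta^I_\lambda\ra.\]
Under these isomorphisms the morphism $\upmu{I*}(\exstd(\lambda)\to\excos(\lambda))$ corresponds to an element of $\Hom\bigl(\expstd_I(\mu'),\,\expcos_I(\mu')\la n\ra\bigr)$ where $n:=2(\delta^I_\lambda-\delta^I_{w_I\mu'})<0$ by the previous paragraph.

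Finally, I would check that this Hom group vanishes for every $n\neq 0$. Composing the two triangles of Lemma \ref{lem:dtri} through $A_I(\mu')\la-\delta_{\mu'}\ra$ and applying the octahedral axiom gives a distinguished triangle $\expstd_I(\mu')\to\expcos_I(\mu')\to\mathcal{C}\to$ whose third term $\mathcal{C}$ lies in $\mathrm{D}^I_{<\mu'}$. Applying $\Hom(\expstd_I(\mu'),-)$ to this triangle and using the orthogonality $R\uHom(\expstd_I(\mu'),-)=0$ on $\mathrm{D}^I_{<\mu'}$ (which follows from Proposition \ref{prop:dual}(2) together with Corollary \ref{cor:mut}(1)) collapses the long exact sequence to
\[\Hom\bigl(\expstd_I(\mu'),\expcos_I(\mu')\la n\ra\bigr)\;\cong\;\Hom\bigl(\expstd_I(\mu'),\expstd_I(\mu')\la n\ra\bigr),\]
and the right-hand side is $\bbk\cdot\delta_{n,0}$ by Proposition \ref{prop:stdqex}(2)--(3). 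The morphism in question is therefore zero, so $\upmu{I*}\exirr(\lambda)=0$.

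The main obstacle I anticipate is the bookkeeping of grading shifts: one has to be careful that the asymmetric definition $\expstd_I(\lambda)=\upmu{I*}\exstd(\lambda)\la-2\delta^I_{w_I\lambda}\ra$ versus $\expcos_I(\lambda)=\upmu{I*}\excos(\lambda)$, together with the opposite-sign shifts in Lemma \ref{lem:stdtostd}, produce a shift $n$ that is strictly negative (hence nonzero) precisely when $\lambda\neq w_I\mu'$. Once the shift is correctly identified, the Hom-vanishing is a routine consequence of the quasi-exceptional structure established in Section \ref{sec:qexcp}.
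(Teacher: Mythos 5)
Your proof is correct and follows essentially the same route as the paper's: both reduce $\upmu{I*}(\exirr(\lambda))$ to the image of a map $\expstd_I(\dm_I(\lambda))\la 2\delta^I_{w_I\dm_I(\lambda)}-\delta^I_\lambda\ra\to\expcos_I(\dm_I(\lambda))\la\delta^I_\lambda\ra$ via Lemma \ref{lem:stdtostd} and then kill the mismatched graded Hom using Propositions \ref{prop:stdqex}(3) and \ref{prop:dual}(1). You merely spell out two steps the paper leaves implicit (the uniqueness of the maximizer of $\delta^I_\bullet$ on the $W_I$-orbit, and the long-exact-sequence derivation of $\Hom(\expstd_I(\mu'),\expcos_I(\mu')\la n\ra)=\bbk\cdot\delta_{n,0}$), so this is a more detailed version of the same argument.
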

\begin{proof} By definition, the object $\exirr(\lambda)$ is image of a nonzero map $h: \exstd(\lambda)\rightarrow \excos(\lambda)$. Hence, using Lemma \ref{lem:stdtostd}, $\upmu{I*}(\exirr(\lambda))$ is image of the map \[\upmu{I*}(h): \expstd(\dm_I(\lambda))\la2\delta^I_{\mathrm{ant}_I(\lambda)}-\delta^I_\lambda\ra\rightarrow \expcos(\dm_I(\lambda))\la\delta^I_\lambda\ra,\] where $\mathrm{ant}_I(\lambda)$ denotes the unique element in $-\wts^+_I\cap W_I(\lambda)$. Properties of dualizable quasi-exceptional collections (namely Proposition \ref{prop:stdqex} part (3) together with Proposition \ref{prop:dual} part (1)) guarantee that $\Hom(\expstd(\dm_I(\lambda))\la m\ra, \expcos(\dm_I(\lambda))\la n\ra)=0$ unless $m=n$. In other words, $\upmu{I*}(\exirr(\lambda)) = 0$ unless $\lambda\in-\wts^+_I$.
\end{proof}

\subsection{Properly Stratified Categories}
Suppose $\mathcal{A}$ is a $\bbk$-linear abelian category. Assume $\mathcal{A}$ is \textit{graded}, that is, there is an automorphism $\la1\ra:\mathcal{A}\rightarrow\mathcal{A}$ which acts as a \textit{shift the grading} functor. In particular, $\la 1\ra$ permutes $\mathrm{Irr}(\mathcal{A})$, the set of (isomorphism classes of) irreducible objects in $\mathcal{A}$. Let $\Omega = \mathrm{Irr}(\mathcal{A})/\mathbb{Z}$, and for each $\gamma\in\Omega$, choose a representative simple object $L_\gamma$ in $\mathcal{A}$. Then any simple object in $\mathcal{A}$ is isomorphic to $L_\gamma\la n\ra$ for some $\gamma\in\Omega$ and $n\in\mathbb{Z}$.

Assume $\Omega$ is partially ordered with respect to $\leq$, and assume for any $\gamma\in\Omega$, the set $\{\xi\in\Omega | \xi\leq\gamma\}$ is finite. Given a finite order ideal $\Gamma\subset \Omega$, we let $\mathcal{A}_\Gamma\subset\mathcal{A}$ denote the Serre subcategory of $\mathcal{A}$ generated by the collection of simple objects $\{L_\gamma\la n\ra | \gamma\in\Gamma, n\in\mathbb{Z}\}$. We simplify notation in the special case $\mathcal{A}_{\leq \gamma} := \mathcal{A}_{\{\xi\in\Omega | \xi\leq \gamma\}}$. The category $\mathcal{A}_{<\gamma}$ is defined similarly. We recall the notion of a graded properly stratified category. See \cite{d, fm} for details on the corresponding notion for algebras.

\begin{defn}\label{def:propstr} Suppose $\mathcal{A}$, $\Omega$, and $\leq$ are as above. We say $(\mathcal{A}, \Omega, \leq)$ is \textit{graded properly stratified} if for each $\gamma\in\Omega$, the following conditions hold:
\begin{enumerate}
\item We have $\End(L_\gamma)\cong\bbk$.
\item There is an object $\bar{\Delta}_\gamma$ and a surjective morphism $\phi_\gamma: \bar{\Delta}_\gamma\rightarrow L_\gamma$ such that \[\ker(\phi_\gamma)\in\mathcal{A}_{<\gamma} \textup{ and } \uHom(\bar{\Delta}_\gamma, L_\xi) = \underline{\Ext}^1(\bar{\Delta}_\gamma, L_\xi)=0 \textup{ if } \xi\not\geq\gamma. \]
\item There is an object $\bar{\nabla}_\gamma$ and an injective morphism $\psi_\gamma: L_\gamma\rightarrow \bar{\Delta}_\gamma$ such that \[\mathrm{coker}(\psi_\gamma)\in\mathcal{A}_{<\gamma} \textup{ and } \uHom(L_\xi, \bar{\nabla}_\gamma) = \underline{\Ext}^1(L_\xi, \bar{\nabla}_\gamma)=0 \textup{ if } \xi\not\geq\gamma. \]
\item In $\mathcal{A}_{\leq \gamma}$, $L_\gamma$ admits a projective cover $\Delta_\gamma\rightarrow L_\gamma$. Moreover, $\Delta_\gamma$ admits a filtration whose subquotients are of the form $\bar{\Delta}_\gamma\la n\ra$ for various $n\in\mathbb{Z}$.
\item In $\mathcal{A}_{\leq \gamma}$, $L_\gamma$ admits an injective envelope $L_\gamma\rightarrow\nabla_\gamma $. Moreover, $\nabla_\gamma$ admits a filtration whose subquotients are of the form $\bar{\nabla}_\gamma\la n\ra$ for various $n\in\mathbb{Z}$.
\item We have $\underline{\Ext}^2(\Delta_\gamma, \bar{\nabla}_\xi) = \underline{\Ext}^2(\bar{\Delta}_\gamma, \nabla_\xi) = 0$ for all $\gamma, \xi\in\Omega$.
\end{enumerate} We call an object in $\mathcal{A}$ \textit{standard}, (resp. \textit{costandard, proper standard, proper costandard}) if it is isomorphic to some $\Delta_\gamma\la n\ra$ (resp. $\nabla_\gamma\la n\ra, \bar{\Delta}_\gamma\la n\ra, \bar{\nabla}_\gamma\la n\ra$). 

\end{defn}

If $\mathcal{A}$ also satisfies (7) $\Delta_\gamma\cong \bar{\Delta}_\gamma$ and $\nabla_\gamma\cong \bar{\nabla}_\gamma$ for all $\gamma\in\Omega$, then $\mathcal{A}$ is called a \textit{graded highest weight category} (see \cite[2.2]{fm}). 

\begin{thm}\label{thm:propstr} The category $\mathrm{ExCoh}^{G \times \mathbb{G}_m}(\spr^I)$ is graded properly stratified with respect to the ordered set $(\wts^+_I, \leq)$. The standard, costandard, proper standard, and proper costandard objects are given by $\exstd(\lambda), \excos_I(\lambda), \expstd_I(\lambda),$ and $\expcos_I(\lambda)$ respectively for $\lambda\in\wts^+_I$. 
\end{thm}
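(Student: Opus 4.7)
The plan is to verify the six axioms of Definition \ref{def:propstr} for the abelian category $\mathcal{A} = \mathrm{ExCoh}^{G\times\gm}(\spr^I)$, with poset $(\wts^+_I,\leq)$, simples $L_\lambda = \exirr_I(\lambda)$, and (proper) (co)standards identified as in the statement. Axiom (1) and the $\uHom$-vanishing parts of axioms (2) and (3) come for free from the dualizable graded quasi-exceptional structure established in Propositions \ref{prop:cosqex}, \ref{prop:stdqex}, \ref{prop:dual} and Corollary \ref{cor:mut}(3), together with \cite[Proposition 2]{BEZ2}. The statements that $\ker(\phi_\lambda)$ and $\mathrm{coker}(\psi_\lambda)$ lie in $\mathcal{A}_{<\lambda}$ are immediate from Proposition \ref{prop:dual}(1); the required $\underline{\Ext}^1$-vanishing reduces, via the kernel/cokernel short exact sequences, to the $\uHom$-vanishing of Corollary \ref{cor:mut}(3), by induction on $\leq$ along the finite order ideal $\{\xi\leq\lambda\}$ (finite because $(\wts^+_I,\leq)\cong(\mathbb{Z}_{\geq 0},\leq)$).

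For axioms (4) and (5), I would use the weight filtration of the dual Weyl module $\mathrm{N}_I(\lambda)$. Applying $\upmu{I*}\,p^*\,\vb_{G/B}$ transports this to a filtration of $\mathcal{V}_I(\lambda)$ with successive quotients $A_I(\nu)\la n_\nu\ra$ as $\nu$ runs over the weights of $\mathrm{N}_I(\lambda)$ with multiplicities. Weights with $\dm_I(\nu)<\lambda$ are killed by $\Pi_\lambda$, whereas those with $\nu\in W_I\lambda$ survive and, after application of $\Pi_\lambda^L$, assemble (via Lemma \ref{lem:stdtostd}) into a filtration of $\exstd_I(\lambda)$ whose subquotients are graded shifts of the proper standard $\expstd_I(\lambda)$. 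The top piece of this filtration corresponds to the highest weight of $\mathrm{N}_I(\lambda)$, so the composition $\exstd_I(\lambda) \twoheadrightarrow \expstd_I(\lambda) \twoheadrightarrow L_\lambda$ has simple head $L_\lambda$; projectivity in $\mathcal{A}_{\leq\lambda}$ is then verified by computing $\Hom^1(\exstd_I(\lambda), L_\xi\la m\ra)$ for $\xi\leq\lambda$ through the same adjunction manipulation used below for axiom (6). Axiom (5) is obtained by the strictly dual argument.

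The main obstacle is axiom (6): one must show $\underline{\Ext}^2(\exstd_I(\gamma), \expcos_I(\xi)) = 0 = \underline{\Ext}^2(\expstd_I(\gamma), \excos_I(\xi))$ for all $\gamma,\xi\in\wts^+_I$. I will focus on the first equality; the second is dual. When $\gamma < \xi$, we have $\exstd_I(\gamma)\in \mathrm{D}^I_{\leq\gamma}\subset \mathrm{D}^I_{<\xi}$, and Corollary \ref{cor:mut}(3) immediately gives $\Hom^i(\exstd_I(\gamma),\expcos_I(\xi))=0$ in every degree $i$. When $\xi<\gamma$, the adjunction $\Pi_\gamma^L\dashv\Pi_\gamma$ yields
\[ \Hom(\exstd_I(\gamma), \expcos_I(\xi)[i]) \cong \Hom_{\mathrm{D}^I_{\leq\gamma}/\mathrm{D}^I_{<\gamma}}\bigl(\Pi_\gamma(\mathcal{V}_I(\gamma)\la-\delta_\gamma\ra),\,\Pi_\gamma(\expcos_I(\xi))[i]\bigr), \]
which vanishes because $\expcos_I(\xi)\in\mathrm{D}^I_{<\gamma}$ forces $\Pi_\gamma(\expcos_I(\xi))=0$.

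The diagonal case $\gamma=\xi$ is the crux. Applying $\Pi_\gamma^L\dashv\Pi_\gamma$ and then $\Pi_\gamma\dashv\Pi_\gamma^R$, together with the identity $\Pi_\gamma^R\Pi_\gamma(\expcos_I(\gamma))\cong\expcos_I(\gamma)$, produces
\[ \underline{\Ext}^2(\exstd_I(\gamma),\expcos_I(\gamma)) \cong \underline{\Ext}^2(\mathcal{V}_I(\gamma)\la-\delta_\gamma\ra,\expcos_I(\gamma)), \]
and the right hand side vanishes by Lemma \ref{lem:vectorbundlehelp}(2), which pins the graded $\mathrm{Hom}$ into concentrated bidegree $(0,\delta_\gamma)$. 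This is the structural reason for building the standards from the vector bundles $\mathcal{V}_I(\lambda)$ rather than the generalized Andersen--Jantzen sheaves $A_I(\lambda)$: the sharp $\mathrm{Ext}^{\geq 1}$-vanishing from $\mathcal{V}_I(\lambda)$ into $\expcos_I(\lambda)$ is precisely what promotes the quasi-exceptional structure to a properly stratified one.
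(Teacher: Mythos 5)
Your proposal follows essentially the same route as the paper: cite \cite[Proposition 2]{BEZ2} for axioms (1)--(3), use the filtration of $\mathcal{V}_I(\lambda)$ by generalized Andersen--Jantzen sheaves (only the $W_I\lambda$-orbit pieces surviving $\Pi_\lambda$) for axioms (4)--(5), and dispose of axiom (6) by the two-step adjunction $\Pi_\gamma^L \dashv \Pi_\gamma \dashv \Pi_\gamma^R$ together with Lemma~\ref{lem:vectorbundlehelp}(2). If anything you are more explicit than the paper on axiom (6), where the paper merely asserts it is ``easily seen.''

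One small point to make explicit: Definition~\ref{def:propstr}(6) asks for vanishing of the Yoneda $\underline{\Ext}^2$ in the abelian category $\mathcal{A}=\mathrm{ExCoh}^{G\times\gm}(\spr^I)$, while your adjunction computation produces vanishing of $\uHom_{\mathrm{D}^I}(-,-[2])$ in the ambient derived category. These are not a priori the same; one needs the natural comparison map $\Ext^2_{\mathcal{A}}(A,B)\to\Hom_{\mathrm{D}^I}(A,B[2])$ (which \cite[Remarque 3.1.17]{BBD} shows is injective) to transfer the vanishing. The paper cites this explicitly; you should too, as the same issue recurs in your reduction of the $\underline{\Ext}^1$-vanishing in axioms (2)--(3). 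This is a citation to supply rather than a gap in the argument.
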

\begin{proof} Let $\mathrm{E}^I$ denote $\mathrm{ExCoh}^{G \times \mathbb{G}_m}(\spr^I)$ within this proof. We must verify the properties of Definition \ref{def:propstr}. Property (1) is immediate. Property (2) and (3) follow since the t-structure is defined using a quasi-exceptional collection. See \cite[Proposition 2]{BEZ2}; note that {\it quasi-hereditary} is given a non-standard meaning in that reference. 

To verify (4), we will check that the object $\exstd_I(\lambda)$ as defined in \eqref{eq:(co)std} satisfies the required conditions. If $A\in\mathrm{E}^I_{<\lambda}$, then we have $\Pi_\lambda(A) = 0$, so $\Hom_{\mathrm{D}^I}(\exstd_I(\lambda), A[n]) = 0$ for all $n$. This implies \begin{equation}\label{eq:ext0}\Ext^n_{\mathrm{E}^I}(\exstd_I(\lambda), A)=\Ext^n_{\mathrm{E}^I_{\leq\lambda}}(\exstd_I(\lambda), A)=0.\end{equation} for all $n\geq 0$ as well using \cite[Remarque 3.1.17]{BBD}. 


The quotient category $\mathrm{D}^I_{\leq\lambda}/\mathrm{D}^I_{<\lambda}$ admits a t-structure whose heart has exactly one irreducible object (up to grading shift) $\Pi_{\lambda}(\exirr_I(\lambda))$ which is isomorphic to $\Pi_{\lambda}(\expstd_I(\lambda))$ by property (2). The object $\mathcal{V}_I(\lambda)\la -\delta_\lambda\ra$ is an extension (in the sense of triangulated categories) of $A_I(\lambda)\la -\delta_\lambda\ra$ by other generalized Andersen--Jantzen sheaves. Hence, the object $\Pi_{\lambda}(\mathcal{V}_I(\lambda)\la -\delta_\lambda\ra)\cong\Pi_{\lambda}(\exstd_I(\lambda))$ has a filtration by $\Pi_{\lambda}(\expstd_I(\lambda))\la m\ra$ for various $m$. Moreover, by applying $\Pi_\lambda^L\circ\Pi_\lambda$ to the nonzero map $\mathcal{V}_I(\lambda)\la -\delta_\lambda\ra\rightarrow A_I(\lambda)\la-\delta_\lambda\ra$, we have nonzero map $\Pi_{\lambda}(\exstd_I(\lambda))\rightarrow \Pi_{\lambda}(\exirr_I(\lambda))$ which is necessarily surjective. Since $\Pi^L$ is right exact, we also get an epimorphism $\exstd_I(\lambda)\rightarrow \exirr_I(\lambda)$.

Consider the short exact sequence \[0\rightarrow K\rightarrow \expstd_I(\lambda)\rightarrow\exirr_I(\lambda)\rightarrow 0\] with $K \in \mathrm{E}^I_{<\lambda}$. Apply $\Hom(\exstd_I(\lambda), -)$ to get a long exact sequence. \eqref{eq:ext0} implies $\Hom(\exstd_I(\lambda), K[n])=0$ for all $n$. A straight-forward adjunction argument also shows $\Hom(\exstd_I(\lambda), \expstd_I(\lambda)[n]) \cong \Hom(\mathcal{V}_I(\lambda), \expcos_I(\lambda)\la\delta_{\lambda}\ra[n])$ which vanishes for $n\neq0$ by Lemma \ref{lem:vectorbundlehelp}, part (2). Hence, $\Hom(\exstd_I(\lambda), \exirr_I(\lambda)[n]) = 0$ for all $n$ as well. This verifies that $\exstd_I(\lambda)$ is projective cover of $\exirr_I(\lambda)$. 

Now we turn to property (5). The inclusion of $B$-modules $ \bbk_{w_I\lambda}\hookrightarrow N_I(\lambda)$ gives rise in the usual way to the nonzero morphism $A_I(w_I\lambda)\la-2\delta^I_{w_I\lambda}-\delta_\lambda\ra\rightarrow \mathcal{V}_I(\lambda)\la-2\delta^I_{w_I\lambda}-\delta_\lambda\ra$. To get the morphism $\exirr_I(\lambda)\rightarrow\excos_I(\lambda)$, we apply $\Pi^R_\lambda\circ\Pi_\lambda$ to the above map noting that there is an isomorphism $\expcos_I(\lambda)\cong \Pi^R_\lambda\Pi_\lambda(A_I(w_I\lambda)\la-2\delta^I_{w_I\lambda}-\delta_\lambda\ra)$ by Lemma \ref{lem:waj}. The rest of the argument follows in a similar manner to (4).

Property (6) is easily seen to hold using properties of the quotient functors along with \cite[Remarque 3.1.17]{BBD}.
\end{proof}

\end{document}